\documentclass[10pt,a4paper]{article}

\usepackage[T1]{fontenc} 

\usepackage[english]{babel}

\usepackage{amsmath}
\usepackage{amsfonts}
\usepackage{amssymb}
\usepackage{dsfont} 
\usepackage{amsthm} 
\usepackage{esint} 

\usepackage{csquotes} 
\usepackage{xcolor} 
\usepackage{enumitem} 
\usepackage{hyperref} 
\usepackage{verbatim} 

\usepackage[capitalise]{cleveref}
\usepackage[backend=bibtex, citestyle=alphabetic, bibstyle=alphabetic, isbn=false, maxnames=5, url = false]{biblatex} 

\DeclareSourcemap{
  \maps[datatype=bibtex]{
    \map{
      \step[fieldsource=doi,final]
      \step[fieldset=url,null]
      \step[fieldset=urldate,null]
    }
  }
}

\newcommand\R{\mathbb{R}}

\newcommand\N{\mathbb{N}}

\newcommand{\fsL}{\textnormal{L}} 
\newcommand{\fsH}{\textnormal{H}} 
\newcommand{\fsW}{\textnormal{W}} 
\newcommand{\Lip}{\mathrm{Lip}} 
\newcommand{\init}{\mathrm{in}} 
\newcommand{\fsC}{\mathcal{C}} 

\newtheorem{theo}{Theorem}

\newtheorem{lemm}{Lemma}

\newtheorem{prop}{Proposition}
\newtheorem{defi}{Definition}
\newtheorem{rema}{Remark}

\crefname{theo}{Theorem}{Theorems}
\crefname{prop}{Proposition}{Propositions}
\crefname{lemm}{Lemma}{Lemmas}

\newcommand\eps{\varepsilon}
\renewcommand{\phi}{\varphi}

\newcommand*{\dd}{\mathop{}\!\mathrm{d}}
\newcommand\indic[1]{\mathds{1}_{#1}}

\author{Hector Bouton
  \thanks{Email: \href{bouton@imj-prg.fr}
    {\texttt{bouton@imj-prg.fr}}\\
  Université Paris Cité and Sorbonne Université, CNRS, IMJ-PRG, F-75013 Paris, France}}
\title{Absence of gelation and regularity for coagulation-fragmentation-diffusion equations with critical coagulation rate in low dimension}
\date{} 

\begin{document}

\maketitle
\begin{abstract}
  We consider a discrete coagulation-fragmentation-diffusion system on a bounded domain of $\R^d$. In dimension $d\leq 2$, we prove mass conservation (that is absence of gelation) when the coagulation coefficient satisfies the critical additive bound $a_{i, j} \leq C(i + j)$ for some constant $C > 0$. We thereby close the remaining gap between the spatially homogeneous setting and the spatially inhomogeneous diffusive setting.

  In dimension $d\leq 4$, we impose a higher decay rate on the coagulation and fragmentation coefficients, but we nonetheless prove regularity and absence of gelation under mild assumptions on the diffusion rates. 
\end{abstract}

\section{Introduction}
\subsection{Background and motivation}
We consider in this paper a discrete coagulation-fragmentation-diffusion equation on a bounded domain $\Omega$ of $\R^d$ with homogeneous Neumann boundary conditions. This system of infinitely many equations describes the evolution of clusters $c_i = c_i(t, x)$ that coagulate, fragment and diffuse. For $i \in \N^* := \N\setminus\{0\}$, we consider
\begin{equation}\label{eq:coagulation}
  \left\{
  \begin{aligned}
  & \partial_t c_i - d_i \Delta c_i = Q_i(c) + F_i(c)\quad
  &&\text{in } [0, T]\times\Omega, \\
  & \nabla c_i\cdot n = 0 \quad
  &&\text{on }  [0,T]\times\partial\Omega, \\
  & c_i(0,\cdot) = c_{i, in} \quad 
  &&\text{in } \Omega,
\end{aligned}
\right.
\end{equation}
where $T > 0$ and $\Omega$ is a smooth bounded domain of $\R^d$. The coagulation operator $Q_i$ is given by:
\begin{equation} \label{eq:Q}
  Q_i(c) := Q_i^+(c) - Q_i^-(c) = \frac{1}{2}\sum_{j=1}^{i-1}a_{i-j,j}c_{i-j}c_j - \sum_{j=1}^{+\infty}a_{i,j}c_ic_j,
\end{equation}
for some non-negative coefficients $a_{i, j}$.

The fragmentation operator takes the form
\begin{equation} \label{eq:F}
  F_i(c) := F_i^+(c) - F_i^-(c) = \sum_{j=1}^{+\infty}B_{i+j}\beta_{i + j, i}c_{i+j} - B_ic_i,
\end{equation}
for non-negative coefficients $B_i$ and $\beta_{i, j}$.\medskip

We make the following assumptions on the coefficients of the problem:
\begin{enumerate}[label={(A\arabic*)}]
  \item\label{ass:symmetry} \textit{Symmetry of the coagulation rate:}
  $$\forall i, j \in \N^*, \, a_{i, j} = a_{j, i}.$$
  \item\label{ass:bounda}\textit{Linear bound on the coagulation rate:}
  $$\exists C_a > 0, \, \forall i, j \in \N^*, \, 0 \leq a_{i, j} \leq C_a(i + j).$$
  \item\label{ass:boundB} \textit{At most polynomial growth of the fragmentation rate:}
  $$\exists C_B > 0, \, m \in \N, \, \forall i\in \N^*, \, 0 \leq B_{i} \leq C_B i^m.$$
  \item\label{ass:massB}\textit{Formal conservation of mass during fragmentation:}
  $$\forall i\in \N\setminus\{0, 1\}, \, i = \sum_{j = 1}^{i - 1} j\beta_{i, j}\quad\text{and }\forall i, j\in \N\setminus\{0\}, \, \beta_{i, j} \geq 0.$$
  \item\label{ass:boundd}\textit{Upper and lower boundedness of the diffusion rate:}
  $$\exists \underline D,\, \overline D > 0, \, \forall i \in \N^*, \, \underline D \leq d_i \leq \overline D.$$
\end{enumerate}

We call parameters of the problem the constants $C_a$, $C_B$, $m$, $\underline D$ and $\overline D$.

This equation models clusters of size $i$ and $j$ that coagulate at rate $a_{i, j}$ to form a cluster of size $i + j$. In return, clusters of size $i$ break up at rate $B_i$ to form an average number $\beta_{i, j}$ of clusters of size $j$.\medskip 

This type of coagulation-fragmentation equations has numerous applications in aerosol dynamics, animal grouping, formation of polymers, description of raindrops, formation of galaxies, study of hematology and many others. We refer to the surveys \cite{drake1972, laurencotmischler2004, banasiaklamb2019} and references therein. The homogeneous case without diffusion has been widely studied and the behavior of solutions is precisely described (see for instance the books \cite{banasiaklamb2019, banasiaklamb2019a} and references therein). Much less is known about the coagulation-fragmentation-diffusion system. \medskip

Under the assumption \ref{ass:massB}, one can check that the total mass $\int_\Omega \rho_1 := \int_\Omega \sum_{i = 1}^{+\infty} i c_i$ is formally conserved (cf.~formulas \eqref{eq:weak} and \eqref{eq:weakF} below with $\phi_i = i$). However, this formal result may fail, and it may happen that after a finite time $t > 0$ (or even instantaneously) $\int_\Omega \rho_1(t, x) \dd x < \int_\Omega \rho_1(0, x) \dd x$. This phenomenon is called \textit{gelation} and corresponds to the appearance of a macroscopic object. In this paper we mainly consider additive bounds on the coagulation rate. That means that $a_{i, j} \lesssim (i + j)^\gamma$ for some $\gamma \in \R$. For the homogeneous case (without diffusion and where $\Omega = \{0\}$), it is well-known that no gelation occurs when $\gamma \leq 1$ but that gelation can occur if $\gamma > 1$ (see for instance the survey \cite{laurencotmischler2004}). One of our goals is to study whether the absence of gelation still holds when $\gamma\leq1$ for the diffusive case.\medskip

Finally, we recall that thanks to the symmetry assumption \ref{ass:symmetry}, we can write (at the formal level) a weak formulation for $Q_i$ and $F_i$ under the form
\begin{equation}\label{eq:weak}
  \sum_{i=1}^{+\infty} \varphi_i\,Q_i(c) = \frac{1}{2}\sum_{i=1}^{+\infty}\sum_{j=1}^{+\infty}a_{i,j}\, c_i\, c_j\,(\varphi_{i+j}-\varphi_i-\varphi_j),
\end{equation}
\begin{equation}\label{eq:weakF}
  \sum_{i=1}^{+\infty} \varphi_i\,F_i(c) = - \sum_{i=2}^{+\infty}B_ic_i\left(\phi_i - \sum_{j = 1}^{i - 1}\beta_{i, j}\phi_j\right), 
\end{equation}
where $\phi : \N^* \to \R$ is any function such that the left-hand size is absolutely convergent, see for instance \cite[Section 2.3.1]{banasiaklamb2019} in the continuous setting.\medskip

In this paper, we consider a smooth framework and classical solutions.
\begin{defi}[Classical solution to \eqref{eq:coagulation}]
  Let $T > 0$, $d\in\N^*$ and let $\Omega$ be a smooth ($\fsC^2$) bounded domain of $\R^d$. We define a classical solution to \eqref{eq:coagulation} as a family of functions $(c_i)_{i \in\N^*}$ such that
  \begin{itemize}
    \item for any $i\in\N^*$, one has $c_i \in \fsC^2([0, T]\times\overline\Omega)$,
    \item for any $i\in\N^*$, $x\in\Omega$, $t\in [0, T]$, the quantities $Q_i(c)(x, t)$ and $F_i(c)(x, t)$ defined by \eqref{eq:Q} and \eqref{eq:F} are absolutely convergent. Furthermore, the functions $(t, x)\mapsto Q_i(c)(t, x)$, $F_i(c)(t, x)$ are continuous on $[0, T]\times\overline\Omega$,
    \item for any $i\in\N^*$, the equation \eqref{eq:coagulation} is satisfied pointwise on $[0, T]\times\Omega$.
  \end{itemize}  
\end{defi}

\subsection{Main results}
In this paper, we are primarily interested in the case when $\Omega$ is a bounded domain of $\R^d$ for $d\leq 2$.
\begin{theo}\label{thm:main2}
  Let $\Omega$ be a bounded smooth domain of $\R^d$, $d\leq 2$. Let $(c_{i, \init})_{i\in\N^*}$ be smooth initial data (that is belonging to $\fsC^\infty(\overline\Omega)$), compatible with the boundary condition, such that for any $k, l \in \N$, there exists some constant $C_{k, l}$ such that for any $i\in\N^*$, $\|i^k c_{i, \init}\|_{\fsC^l(\overline\Omega)} \leq C_{k, l}$. We assume that \ref{ass:symmetry} - \ref{ass:bounda} - \ref{ass:boundB} - \ref{ass:massB} and  \ref{ass:boundd} hold. \medskip

  Then there exists a unique classical solution to \eqref{eq:coagulation} on $\R_+\times\Omega$. Furthermore, for any $T > 0$, $k, l \in\N$, there exists a constant $C'_{k, l}$ depending on $k$, $l$, $\Omega$, $d$, $T$, the parameters of the problem, and the initial data such that for any $i\in\N^*$, $\|i^k c_i\|_{\fsC^l([0, T]\times\overline\Omega)} \leq C'_{k, l}$. Moreover, no gelation occurs, that is the total mass is conserved:
  \begin{equation*}
    \forall t \in \R_+, \, \int_\Omega\sum_{i = 1}^{+\infty} ic_i(t) = \int_\Omega\sum_{i = 1}^{+\infty} ic_{i, \init}.
  \end{equation*}
\end{theo}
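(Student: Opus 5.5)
The plan is a truncation/approximation scheme combined with a priori estimates on moments, propagated inductively in the moment order. Following the standard strategy (Wrzosek, Cañizo–Desvillettes–Fellner, Breden–Desvillettes–Fellner), I will first solve the truncated system for sizes $i\le N$, with $a_{i,j}$ and $B_i$ set to zero whenever $i+j>N$. This is a finite reaction–diffusion system with quadratic nonlinearity that conserves $\int_\Omega\sum_{i\le N} i c_i^N$. It preserves nonnegativity, and global smooth solutions exist for each $N$, since the total mass is bounded in $\fsL^1$ and each $c_i^N$ is dominated through the ordering of sizes. The whole point is then to obtain, uniformly in $N$, bounds on $\|i^k c_i^N\|_{\fsC^l([0,T]\times\overline\Omega)}$. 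Passing to the limit by Arzelà–Ascoli then gives a classical solution. Uniqueness will follow from an $\fsL^1$ weighted stability estimate on the difference of two solutions with weight $i$, which is possible because the moments are bounded. Mass conservation follows because $\rho_1$ is the limit of conserved quantities and the uniform bound on $\rho_2=\sum i^2 c_i$ gives tightness, so no mass escapes to infinity.

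The key uniform estimates proceed as follows.
\begin{itemize}
\item[(i)] Mass conservation gives $\rho_1\in \fsL^\infty_t\fsL^1_x$. The duality lemma (Pierre's $\fsL^2$ estimate for $\partial_t\rho - \Delta(M\rho)\le0$ with $M=\sum i d_i c_i/\sum i c_i\in[\underline D,\overline D]$) gives $\rho_1\in \fsL^2([0,T]\times\Omega)$, and in fact in $\fsL^{2+\eps}$ by the improved duality.
\item[(ii)] Next I would bound the second moment. Using \eqref{eq:weak} with $\phi_i=i^2$, the coagulation term is $\sum a_{i,j} ij c_ic_j\le 2C_a \rho_1\rho_2$, while fragmentation only decreases $\rho_2$. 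Thus $\partial_t\rho_2-\Delta(M_2\rho_2)\le 2C_a\rho_1\rho_2$. The critical difficulty is that $\rho_1$ is only in $\fsL^{2+\eps}$, so this is a linear parabolic inequality with potential $\rho_1$. In $d\le2$ I would exploit the fact that $L^{p}$ potentials with $p>(d+2)/2=2$ are subcritical for parabolic equations. Since $d\le 2$, the $\fsL^{2+\eps}$ integrability of $\rho_1$ exceeds the threshold $(d+2)/2$. A Gronwall-type argument, either for $\int\rho_2^q$ or via a duality argument with the adjoint problem $-\partial_t w - M_2\Delta w = \rho_1 w$, whose solution is bounded when $\rho_1\in \fsL^{p}$ with $p>(d+2)/2$, then gives $\rho_2\in\fsL^\infty_t\fsL^1_x$ and then $\fsL^{2+\eps}$.
\end{itemize}

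I expect this step to be the main obstacle. Nondiagonal diffusion $\Delta(M_2\rho_2)$ with only bounded measurable $M_2$ prevents direct use of maximal regularity. I would therefore work with the weaker duality formulation and $\fsL^p$ bounds for parabolic equations in nondivergence form with rough coefficients, using the smallness of $\eps$-improvement (Krylov–Safonov/Calderón–Zygmund-type for $M$ near-constant is not available, so rely on the Cañizo–Desvillettes–Fellner improved duality).

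Once $\rho_1,\rho_2$ are controlled in $\fsL^{2+\eps}$, I would bootstrap to all moments. For $\phi_i=i^k$, the weak form gives $\rho_{k}$'s source bounded by $C\sum_{l}\rho_{l}\rho_{k+1-l}$ with $l$ between $1$ and $k$, and the extreme terms are again linear in $\rho_k$ with potential $\rho_1$, handled as in (ii). Induction in $k$ gives all $\rho_k\in\fsL^p$ for every $p<\infty$. Then heat-kernel smoothing (each $c_i$ solves a scalar heat equation with constant $d_i$ and a source controlled by moments), together with Schauder estimates and induction on $l$, yields $\|i^kc_i\|_{\fsC^l}\le C'_{k,l}$ uniformly in $i$ and $N$. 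The fragmentation polynomial growth (A3) costs only $m$ extra moments at each stage.
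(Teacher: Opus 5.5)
Your overall architecture matches the paper's: truncation and compactness, $\rho_1\in\fsL^{2+\eps}(\Omega_T)$ from the improved duality lemma, the linear inequality $\partial_t\rho_k-\Delta(\mu_k\rho_k)\le C\rho_1\rho_k$, and a final bootstrap on each $i^lc_i$ using the constant diffusion $d_i$. The gap is in step (ii), which you rightly single out: none of the tools you name closes it.

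\emph{Why your three tools fail.} The claim that the adjoint solution of $-\partial_t w-M_2\Delta w=\rho_1w+H$ is bounded once $\rho_1\in\fsL^p$ with $p>(d+2)/2$ is the De Giorgi--Nash--Moser threshold, which holds for divergence-form operators. This adjoint is in non-divergence form, with $M_2$ merely measurable and $\overline D/\underline D$ arbitrary. There, $\fsL^\infty$ bounds come from the Krylov--Tso (parabolic ABP) estimate, which needs the zeroth-order term in $\fsL^{d+1}$ ($=\fsL^3$ for $d=2$), or only slightly below with an ellipticity-dependent loss. Knowing $\rho_1\in\fsL^{2+\eps}$ with $\eps$ small does not provide this. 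The Gronwall argument on $\int_\Omega\rho_2^q$ fails because testing $\Delta(M_2\rho_2)$ against $\rho_2^{q-1}$ produces the uncontrolled $\nabla M_2$. The Cañizo--Desvillettes--Fellner improved duality covers $\partial_t u-\Delta(Mu)\le0$ but not a potential term $\rho_1u$. So your bounds $\rho_2\in\fsL^\infty_t\fsL^1_x\cap\fsL^{2+\eps}$, and the higher-$k$ induction ``handled as in (ii)'', are not established.

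\emph{The missing idea.} You only need $w(0)\in\fsL^2(\Omega)$, not $w\in\fsL^\infty$, and this follows from an energy estimate that does not see the regularity of $\mu$. After time reversal, test $\partial_t w-\mu\Delta w=wf+H$ (with $w(0)=0$ and Neumann conditions) by $\partial_t w/\mu$. Since $-\int_\Omega\Delta w\,\partial_t w=\frac12\frac{\dd}{\dd t}\int_\Omega|\nabla w|^2$, you get
\[
\int_0^t\int_\Omega|\partial_s w|^2+\int_\Omega|\nabla w(t)|^2\le C\int_0^t\int_\Omega\left(w^2f^2+H^2\right),
\]
with $C$ depending only on the bounds $\underline D\le\mu\le\overline D$. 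Moreover $\|w(t)\|_{\fsL^2(\Omega)}^2\le T\|\partial_s w\|_{\fsL^2(\Omega_t)}^2$.

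In $d\le2$ one has $\fsH^1(\Omega)\subset\fsL^q(\Omega)$ for every $q<\infty$. Taking $q=2p/(p-2)$ gives $\|wf\|_{\fsL^2(\Omega_t)}\le\|w\|_{\fsL^q(\Omega_t)}\|f\|_{\fsL^p(\Omega_T)}$, and a Gronwall argument on $\int_\Omega w^q$ closes for every $p>2$. This is exactly where $d\le2$ enters: in $d=3$ it would require $p\ge3$.

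By duality this yields $\|\rho_k\|_{\fsL^2(\Omega_T)}\le C\|\rho_{k,\init}\|_{\fsL^2(\Omega)}$ for all $k$. This requires bounding the whole coagulation source by $C\rho_1\rho_k$ via $(i+j)i^lj^{k-l}\le C_k(i^kj+ij^k)$, so no separate forcing from intermediate products appears.

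This $\fsL^2$ bound is all that is needed afterwards. In $(\partial_t-d_i\Delta)i^lc_i\le K(\rho_{l+1}\rho_1+\rho_{l+m+1})$ the forcing lies in $\fsL^{p_1}$ with $1/p_1=1/p+1/2<1$. Heat smoothing in $d\le2$ then iterates to $\fsL^\infty$ and higher regularity, as you describe.
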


Using a different method, we then provide a result in dimension $d\leq 4$, under much more stringent conditions on the coagulation rate and the fragmentation rate. For some $\eta > 0$, we replace \ref{ass:bounda} and \ref{ass:boundB} by

\begin{enumerate}[label={$(\text{A'}\arabic*)_\eta$}]
    \setcounter{enumi}{1}
  \item\label{ass:bounda'} \textit{Strong decay of the coagulation rate:}
  $$\exists C_a > 0, \, \forall i, j \in \N^*, \qquad \, 0 \leq a_{i, j} \leq C_a\frac{ij}{(i + j)^{2+{\eta}}}.$$
  \item\label{ass:boundB'} \textit{Strong decay of the fragmentation rate:}
  $$\exists C_B > 0, \, \forall i \in \N^*, \qquad \, 0 \leq B_{i} \leq C_B\frac{1}{i^{1 + \eta}}.$$
\end{enumerate}

In this case, we call parameters of the problem the constants $\eta$, $C_a$, $C_B$, $\underline{D}$ and $\overline D$.

\begin{theo}\label{thm:maind}
  Let $\Omega$ be a bounded smooth domain of $\R^d$, $d\leq 4$, let $\eta > 0$. Let $(c_{i, \init})_{i\in\N^*}$ be smooth initial data, compatible with the boundary condition, such that for any $k, l \in \N$, there exists a constant $C_{k, l}$ such that for any $i\in\N^*$, $\|i^k c_{i, \init}\|_{\fsC^l(\overline\Omega)} \leq C_{k, l}$. We assume that \ref{ass:symmetry} - \ref{ass:bounda'} - \ref{ass:boundB'} - \ref{ass:massB} and \ref{ass:boundd} hold.\medskip
  
  Then, there exists a unique classical solution to \eqref{eq:coagulation} on $\R_+\times\Omega$. Furthermore, for any $T > $, $k, l \in\N$, there exists a constant $C'_{k, l}$ depending on $k$, $l$, $\Omega$, $d$, $T$, the parameters of the problem and the initial data, such that for any $i\in\N^*$, $\|i^k c_i\|_{\fsC^l([0, T]\times\overline\Omega)} \leq C'_{k, l}$. Moreover, no gelation occurs, that is the total mass is conserved:
  \begin{equation*}
    \forall t \in \R_+, \, \int_\Omega\sum_{i = 1}^{+\infty} ic_i(t) = \int_\Omega\sum_{i = 1}^{+\infty} ic_{i, \init}.
  \end{equation*}
\end{theo}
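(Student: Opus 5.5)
We will prove \cref{thm:maind} by an a priori estimate on the moments $\rho_k := \sum_i i^k c_i$, combined with a standard truncation/continuation argument. The first step is to truncate: for $N\in\N^*$ we consider the finite system obtained by setting $a_{i,j}=0$ when $i+j>N$ and $B_i=0$ when $i>N$. It has a unique global smooth nonnegative solution $c^N$ (finite reaction--diffusion system with quadratic nonlinearities plus the conserved mass, which gives an $\fsL^\infty_t\fsL^1_x$ bound). Its mass is exactly conserved. The goal is then to obtain bounds on $\|i^k c_i^N\|_{\fsC^l}$ that are uniform in $N$. With these in hand, we pass to the limit $N\to\infty$ by Arzelà--Ascoli and a diagonal extraction, getting a classical solution. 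Uniqueness follows from a Gronwall estimate on $\sum_i i|c_i-\tilde c_i|$, using the moment bounds. Mass conservation comes from the uniform bounds on $\rho_2$: they make the tails $\sum_{i>M} i c_i$ uniformly small, so the identity $\int\rho_1(t)=\int\rho_1(0)$ survives the limit.

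The key step is a uniform $\fsL^\infty$ bound on $\rho_1$, or at least an $\fsL^p$ bound with $p$ large. This is where the strong decay assumptions enter. Under \ref{ass:bounda'}, with $\phi_i=i$ the coagulation contribution cancels. The fragmentation and coagulation rates themselves, however, are bounded: $a_{i,j}\le C_a\min(i,j)^{-\eta}$-type bounds, which gives $a_{i,j}\le C$, and $B_i\le C_B$. Hence $|Q_i^-|\le C c_i\rho_0$ and $F_i^-\le Cc_i$, so the system is ``almost linear'' in each component.

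We would run the duality method on $\rho_1$. Setting $M=\sum_i i d_i c_i/\rho_1\in[\underline D,\overline D]$, we have $\partial_t\rho_1-\Delta(M\rho_1)=0$, and Pierre's duality lemma gives $\|\rho_1\|_{\fsL^2([0,T]\times\Omega)}\le C$ in any dimension. To go further we use the improved duality (Cañizo--Desvillettes--Fellner): when $\overline D-\underline D$ is not small, it still gives $\fsL^{2+\eps}$. We then bootstrap on each $c_i$. The right-hand side $Q_i^+$ is controlled by $\sum_{j<i} a_{i-j,j}c_{i-j}c_j$. Using $a_{i-j,j}\le C\frac{(i-j)j}{i^{2+\eta}}$, we get $iQ_i^+\lesssim i^{-1-\eta}\rho_1^2$, which is summable in $i$ with weight. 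Consequently $\partial_t(ic_i)-d_i\Delta(ic_i)\le C i^{-1-\eta}\rho_1^2+C\,i^{-\eta}\rho_1$, where the last term comes from $F^+$ via \ref{ass:boundB'}. Parabolic regularity for a heat equation with constant diffusion $d_i$ and a source in $\fsL^{1+\eps/2}$ yields $ic_i\in\fsL^q$ with $q<\frac{d+2}{d}(1+\eps/2)$, with a norm of order $i^{-\eta}$ up to constants. Summing over $i$ gives $\rho_1\in \fsL^q$. For $d\le4$ we have $(d+2)/d\ge 3/2>1$, so this strictly improves the integrability exponent. Iterating, with the gain multiplicative at each step, reaches $\rho_1\in\fsL^p$ for all $p<\infty$ and then $\fsL^\infty$. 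We expect this iteration to be \textbf{the main obstacle}: the first step must start from $\rho_1^2\in\fsL^{1+\eps/2}$ and produce a gain. In $d\le4$ the heat-kernel gain $\frac{d+2}{d}$ must beat the loss from squaring, i.e.\ we need $\frac{d+2}{2d}(2+\eps)>2$ eventually. We would first try to exploit that the summed source also carries the cancellation $\sum_i iQ_i=0$ to avoid the crude $\rho_1^2$ bound, and use the decay $i^{-1-\eta}$ to interpolate between $\rho_1$ and the dissipation $\sum_i Q_i^-\,i$.

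Once $\rho_1\in\fsL^\infty$, higher moments are routine. With $\phi_i=i^k$, \eqref{eq:weak} and $a_{i,j}\lesssim 1$ give $\partial_t\rho_k-\Delta(\cdot)\lesssim \sum_{l<k}\rho_l\rho_{k-l}$, and the same componentwise heat-equation estimate on $i^kc_i$ with induction in $k$ bounds all $\|i^kc_i\|_{\fsL^\infty}$ uniformly in $i$ and $N$. Hölder and then Schauder estimates on each equation, again inducting on $l$, give the $\fsC^l$ bounds, which concludes via the compactness argument above.
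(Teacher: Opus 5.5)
The gap is exactly the step you flag as \emph{the main obstacle}. For $d=3,4$ it is the heart of the theorem.

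With the sharp smoothing exponent of \cref{thm:smoothing}, a source $\rho_1^2\in\fsL^{p/2}(\Omega_T)$ yields $i^{2+\eta}c_i\in\fsL^{q}(\Omega_T)$ with $\frac1q=\frac2p-\frac{2}{d+2}$. So the iteration gains integrability if and only if $p>\frac{d+2}{2}$, i.e.\ $p>\frac52$ when $d=3$ and $p>3$ when $d=4$. Your bookkeeping with a multiplicative gain $\frac{d+2}{d}$ is not right: squaring halves the exponent, and you compare the output with $2$ rather than with the input exponent $2+\eps$.

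The improved duality lemma only gives $\rho_1\in\fsL^{2+\eps}(\Omega_T)$, with $\eps>0$ small in general since it depends on $\overline D/\underline D$. For $d=4$ the first step then lands in $\fsL^{q}$ with $q$ close to $\frac32$, worse than where you started. The scheme only runs for $d\le 2$.

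Your proposed remedy does not supply the missing integrability. The cancellation $\sum_i iQ_i=0$ is already fully used in $\partial_t\rho_1-\Delta(\mu\rho_1)=0$. The only difficulty in that equation is the merely bounded measurable coefficient $\mu$, and duality for it gives nothing better than $\fsL^{2+\eps}$ with small $\eps$.

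The paper crosses the threshold $p=3$ with three ingredients absent from your plan.
\begin{enumerate}
\item Test the equations with $i^{3+\eta}c_i$. The strong decay \ref{ass:bounda'}, \ref{ass:boundB'} bounds the reaction terms by $C(\rho_1^3+\rho_1^2)$. Cauchy--Schwarz in $i$, with $\sum_i i^{-1-\eta}<\infty$, converts the $\ell^2$ dissipation into $\int_{\Omega_T}|\nabla\rho_1|^2\le C\bigl(1+\int_{\Omega_T}\rho_1^3\bigr)$.
\item Set $w:=\int_0^t\sum_i d_i\,ic_i$. It satisfies $\partial_t w=\mu(\Delta w+\rho_{1,\init})$ with Neumann conditions and $\partial_t w\ge 0$. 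The Krylov--Safonov type result for time-monotone solutions, \cref{thm:main-neumann}, then gives a uniform $\fsC^\alpha$ bound on $w$ despite the rough $\mu$.
\item Since $0\le\rho_1\le\Delta w'$ with $w'$ Hölder, \cref{thm:interpolation} yields $\|\rho_1\|_{\fsL^{3+\eps_1}(\Omega_T)}^{3+\eps_1}\le C\|\nabla\rho_1\|_{\fsL^2(\Omega_T)}^2+C\le C\|\rho_1\|_{\fsL^3(\Omega_T)}^3+C$. This closes by Young's inequality.
\end{enumerate}
Only from $\rho_1\in\fsL^{3+\eps}(\Omega_T)$ can your bootstrap start; it is the paper's recursion $\frac{1}{q_{r+1}}=\frac{2}{q_r}-\frac13$.

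The rest of your outline is sound: approximation, uniqueness, mass conservation, and the linear induction on higher moments once $\rho_1\in\fsL^\infty$. There is one fixable point. The fragmentation source for $ic_i$ is only $O(i^{-\eta})$, which is not summable when $\eta\le 1$. Bound $i^{2+\eta}c_i$ uniformly in $i$ instead; both sources are then at most $C(\rho_1^2+\rho_1)$. Then use $\|\rho_1\|_{\fsL^q}\le\sum_i i^{-1-\eta}\|i^{2+\eta}c_i\|_{\fsL^q}$.
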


\begin{rema}
  The assumption that the initial data are smooth and decay faster in $i$ than any polynomials rules out a number of cases. However, a careful reading of the proof shows that one can assume less initial regularity (and in particular only a finite number of moments) at the cost of obtaining less regularity on the solutions.
\end{rema}

Regarding \cref{thm:main2}, our proof follows the duality approach of \cite{canizodesvillettes2010, bredendesvillettes2017}. However, in dimension $2$, we are able to identify another duality problem and by using both space and time regularity, we can stay in a $\fsL^2$ framework for higher moments. More precisely, we prove the
\begin{prop}\label{thm:duality-lemma}
   Let $T > 0$ and let $\Omega$ be a $\fsC^2$ bounded domain of $\R^d$, $d\leq 2$. Let $\mu = \mu(t, x) : [0, T]\times\overline\Omega \to [a, b]$ be a smooth coefficient, where $0 < a < b < +\infty$ are two constants. Let $p > 2$ and let $f$ a smooth function $[0, T]\times\overline\Omega\to \R$ and $u_\init$ a smooth function $\overline\Omega\to \R_+$. Let $u \geq 0$ be a $\fsC^2$ function such that
  \begin{equation}
    \left\{
    \begin{aligned}
      &\partial_t u - \Delta (\mu u) \leq uf \quad
      &&\text{in } [0, T]\times\Omega,\\
      &\nabla (\mu u) \cdot n = 0 \quad
      &&\text{on } [0, T]\times\partial\Omega,\\
      &u(0) = u_\init \quad
      &&\text{in }\Omega.
    \end{aligned}
    \right.
  \end{equation} 
  Then, there exists a constant $C_p$ depending on $\|f\|_{\fsL^p([0, T]\times\Omega)}$, $T$, $\Omega$, $d$, $p$, $a$ and $b$ such that
  \begin{equation*}
    \|u\|_{\fsL^2([0, T]\times\Omega)} \leq C_p\|u_\init\|_{\fsL^2(\Omega)}.
  \end{equation*}
\end{prop}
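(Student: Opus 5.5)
Duality: test $u$ against the solution of a backward dual problem, as in \cite{canizodesvillettes2010, bredendesvillettes2017}, but with a zeroth-order potential built from $f$. Fix a smooth $H \geq 0$ on $[0,T]\times\overline\Omega$. Let $v$ solve the backward linear problem
\begin{equation*}
  -\partial_t v - \mu\Delta v - f v = H \quad\text{in } [0,T]\times\Omega,\qquad \nabla v\cdot n = 0 \text{ on } \partial\Omega,\qquad v(T) = 0.
\end{equation*}
Since the equation is linear with bounded coefficients, $v$ exists. Because $H\geq0$, the maximum principle gives $v \geq 0$. Multiply the inequality for $u$ by $v\geq 0$ and integrate over $[0,T]\times\Omega$. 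The boundary terms vanish because $\nabla(\mu u)\cdot n = 0$ and $\nabla v\cdot n = 0$. After integrating by parts this yields
\begin{equation*}
  \int_0^T\!\!\int_\Omega uH \leq \int_\Omega u_\init v(0).
\end{equation*}
Replacing $f$ by $f_+$ only strengthens the hypothesis and keeps $\|f_+\|_{\fsL^p}\le\|f\|_{\fsL^p}$, so we may assume $f\ge0$. By density and duality it then suffices to prove $\|v(0)\|_{\fsL^2(\Omega)} \leq C_p\|H\|_{\fsL^2}$ for $H \geq 0$. Splitting $H=H_+-H_-$ extends this to signed $H$, and then taking $H = u$ concludes.

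The core is an a priori estimate for the dual problem. Reverse time, $w(s)=v(T-s)$, so that $\partial_s w - \mu\Delta w = f w + H$ with $w(0)=0$. I would use the classical duality estimate with variable diffusion $\mu\in[a,b]$. Multiply by $-\Delta w$ and integrate; the Neumann condition lets the boundary terms vanish. This gives
\begin{equation*}
  \tfrac12\frac{d}{ds}\|\nabla w\|_2^2 + a\|\Delta w\|_2^2 \leq \int |fw+H||\Delta w|,
\end{equation*}
hence
\begin{equation*}
  \sup_s\|\nabla w\|_2^2 + \|\Delta w\|_{\fsL^2_{t,x}}^2 \lesssim \|fw\|_{\fsL^2_{t,x}}^2 + \|H\|_{\fsL^2_{t,x}}^2.
\end{equation*}
Using the equation, $\partial_s w$ is controlled in $\fsL^2$ by the same quantities. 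We also need an $\fsL^2$ bound on $w$ itself: multiplying by $w$ gives $\|w\|_{\fsL^\infty_t\fsL^2_x}$, but $\int f w^2$ must be handled. Writing $\mu\Delta w\cdot w$ produces a term $\nabla\mu\cdot w\nabla w$. I would avoid it by working instead with the $\fsH^1$-level quantity above plus a mean estimate, since the mean of $w$ is controlled via integrating the equation.

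The main obstacle, where $d\leq2$ and $p>2$ enter, is absorbing $\|fw\|_{\fsL^2_{t,x}}$. By Hölder,
\begin{equation*}
  \|fw\|_{\fsL^2_{t,x}} \leq \|f\|_{\fsL^p_{t,x}}\|w\|_{\fsL^{q}_{t,x}}, \qquad \tfrac1q=\tfrac12-\tfrac1p,\ q<\infty.
\end{equation*}
In $d\leq 2$ the space $\fsL^\infty_t\fsH^1_x \cap \fsH^1_t\fsL^2_x$ embeds into $\fsL^q_{t,x}$ for every $q<\infty$. Indeed, $\fsH^1_x\subset \fsL^r_x$ for all $r<\infty$ when $d=2$, so $\fsL^\infty_t\fsL^q_x$ is controlled directly. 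Moreover, via Gagliardo–Nirenberg one gets $\|w\|_{\fsL^q} \leq C\|w\|_{Y}^{\theta}\|w\|_{\fsL^2}^{1-\theta}$ with $\theta<1$, where $Y$ denotes the parabolic energy space. Young's inequality then gives
\begin{equation*}
  \|f\|_p\|w\|_q \leq \eps\|w\|_Y + C_\eps\|f\|_p^{1/(1-\theta)}\|w\|_{\fsL^2}.
\end{equation*}
The remaining $\fsL^2$ term is closed by Grönwall applied on short time intervals. The cleanest route is to split $[0,T]$ into finitely many intervals on which $\|f\|_{\fsL^p}$ is small; their number depends only on $\|f\|_p$ through absolute continuity, and $p<\infty$ matters here. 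On each interval one absorbs directly, then iterates, using $w$ at the end of each interval as new initial data in $\fsH^1$. This gives $\sup_s\|w(s)\|_{\fsH^1}\le C\|H\|_{\fsL^2}$, hence the bound on $\|v(0)\|_{\fsL^2}$. I expect this absorption and iteration, together with handling $\nabla\mu$ if needed, to be the delicate step.
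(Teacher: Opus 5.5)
Your proof is correct. Its duality part is the paper's: the backward problem with potential $f$, $v\ge0$ by the maximum principle, $\int_0^T\!\!\int_\Omega uH\le\int_\Omega u_\init v(0)$, then $H=u$. The difference is in how you estimate and close the dual problem.

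The paper tests with $\partial_s w/\mu$ instead of $-\Delta w$. Both avoid any $\nabla\mu$ term, and through the equation they give the same control of $\partial_s w$, $\Delta w$ and $\sup_s\|\nabla w\|_{\fsL^2}$. The paper then bounds $\|w(t)\|_{\fsL^2}\le\sqrt{T}\,\|\partial_s w\|_{\fsL^2(\Omega_t)}$ using $w(0)=0$, rather than a mean estimate. Finally it sets $\phi(t)=\int_\Omega|w(t)|^q$. Since $\|wf\|_{\fsL^2(\Omega_t)}\le\|f\|_{\fsL^p}\bigl(\int_0^t\phi\bigr)^{1/q}$, raising the Sobolev bound $\|w(t)\|_{\fsL^q}\lesssim\|\nabla w(t)\|_{\fsL^2}+\|w(t)\|_{\fsL^2}$ to the power $q$ gives the linear inequality $\phi(t)\lesssim\|f\|_{\fsL^p}^q\int_0^t\phi+\|H\|_{\fsL^2}^q$. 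One Gronwall step then yields the constant $e^{C\|f\|_{\fsL^p}^q}$, with no smallness, interpolation or time splitting.

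Both of your alternatives also work. The Gagliardo--Nirenberg/Young version closes by Gronwall on $\|w(t)\|_{\fsL^2}^2$ directly, so no splitting is needed there. The splitting version gives a constant of the form $C^{1+\|f\|_{\fsL^p}^p/\delta^p}$.

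One correction to your justification of the splitting. The number of intervals depends only on $\|f\|_{\fsL^p}$ because you choose the endpoints adaptively, so that each slab carries $\int\!\!\int|f|^p=\delta^p$. This uses the continuity and additivity of $t\mapsto\int_0^t\!\int_\Omega|f|^p$. Absolute continuity alone, with equal-length intervals, would make the count depend on $f$ itself rather than just its norm.

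Lastly, the reduction to $f\ge0$ and the $H_\pm$ splitting are superfluous. Since $u\ge0$ is $\fsC^2$, you may take $H=u$ directly.
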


We emphasize that in this result the constant $C_p$ depends on $\mu$ only through the upper and lower-bounds $a$ and $b$.\medskip

The proof of \cref{thm:maind} relies on methods used in \cite{boutondesvillettes2025} to study reaction-diffusion systems. Instead of working in $\fsL^2([0, T]\times\Omega)$, we show that the local mass $\rho_1 := \sum_{i = 1}^{+\infty} ic_i$ enjoys some negative Hölder regularity. This initial estimate turns out to be sufficient to show smoothness of all the moments. We note that this method bears some resemblance to the works of Hammond and Rezakhanlou \cite{hammond2007, rezakhanlou2010, rezakhanlou2014}, where the solution to the Poisson equation with right-hand side $\rho_1$ is shown to be bounded in $\fsL^\infty([0, T]\times\Omega)$.

\subsection{Comparison with the literature}

Previous works on coagulation-diffusion systems have studied the questions of regularity and absence of gelation. In a series of works of Rezakhanlou and coauthors \cite{hammond2007, rezakhanlou2010, rezakhanlou2014} and of Cañizo, Desvillettes, Fellner and coauthors \cite{canizodesvillettes2010, canizodesvillettes2014, bredendesvillettes2017} both absence of gelation and regularity of the solutions to the system \eqref{eq:coagulation} were proved in all dimensions.\medskip

However, additional hypotheses had to be made in comparison to \ref{ass:bounda} and \ref{ass:boundd}. Indeed, it was necessary to assume at least that for any fixed $j\in\N^*$, one has $\frac{a_{i, j}}{i} \to 0$ as $i \to \infty$. This assumption did not allow to treat the critical case $a_{i, j} \approx  i + j$, which is known to preserve mass in the homogeneous setting (see for instance the survey \cite{laurencotmischler2004}). We are thus able to close the gap in dimension $d\leq 2$ between the homogeneous and the spatially inhomogeneous diffusive setting. We point out that this critical rate is also of importance in applications for colliding water droplets \cite{golovin1963}, for particles in laminar shear field \cite{smoluchowski1918} (see also \cite{drake1972}), or in polymerization processes \cite{cohenbenedek1982, dacosta2015}. We also notice that a theory of weak or mild solutions for the diffusive case has been proposed in \cite{wrzosek1997, laurencot2002}, however the critical case is not included in those results, since similar decay assumptions to those in the previously cited works had to be imposed. Thus, even the existence of global weak solutions in dimension $d\leq 2$ was an open problem to the knowledge of the author (see for instance the discussion raised in \cite[p.407]{wrzosek2004}). \medskip

We point out that in \cref{thm:main2} the assumptions on the fragmentation rates are very weak. The assumption \ref{ass:massB} ensures that the fragmentation process neither creates nor destroys mass, and we only assume polynomial growth of the fragmentation rate \ref{ass:boundB}. We emphasize that such a weak assumption is possible because we work in a discrete framework. In the continuous setting, where the mass is allowed to vary continuously in $(0, +\infty)$, one may observe the dual phenomenon of gelation: shattering (a loss of mass due to the appearance of excessively small particles). Furthermore, we note that in our setting, one can take $B \equiv 0$. Hence, contrary to some works such as \cite{shindin2026}, the fragmentation process does not balance coagulation in order to avoid gelation.\medskip

Another assumption that had to be made in previous works in the inhomogeneous setting in order to get classical solutions was that the diffusion coefficients $d_i$ had to be non-increasing with $i$, or, at least, to satisfy a weaker regularity condition (see \cite{rezakhanlou2014} for more details). We are able to remove this hypothesis in both theorems. However in dimension $d = 3, 4$, we have to impose a very fast decay rate for the coefficients \ref{ass:bounda'} and \ref{ass:boundB'}. To the best of the author's knowledge, existence of global classical solutions was not proved previously, even for strongly decaying kernels, without additional hypotheses on the diffusion coefficients $d_i$ (see for instance the case of bounded coagulation kernels in \cite{amann2000, wrzosek2002}). We nevertheless note that \cite{canizodesvillettes2010} proves absence of gelation with the mere assumption \ref{ass:boundd}.

In the continuous and homogeneous setting, we note that coagulation equations satisfying \ref{ass:bounda'} have been studied \cite{clarkkatsouros1999, laurencot2018}, however the blow-up of the coagulation coefficient for very small particles in the continuous setting of those works raises other challenges. \medskip

Finally, we point out that in a number of applications it is reasonable to expect that $d_i \to 0$ as $i\to +\infty$. Indeed, as $i$ grows, the mass of a cluster of size $i$ is expected to grow, and hence one can suppose that it diffuses more slowly. Our method is unable to cover this case contrary to the previous works of Rezakhanlou and coauthors \cite{hammond2007, rezakhanlou2010, rezakhanlou2014} or \cite{canizodesvillettes2010a}, where absence of gelation is proved under an additional decay hypothesis on $a_{i, j}$. We also point to a recent renewed interest in this direction \cite{dasjaiswal2026, shindin2026}.

\subsection{Plan of the paper and notations}
In \cref{sec:proof2}, we prove \cref{thm:duality-lemma} and we then apply it to complete the proof of \cref{thm:main2}. We then prove \cref{thm:maind} in \cref{sec:proofd}. Finally, in Appendix~\ref{sec:smoothing}, we give a short proof of a well-known smoothing estimate for the heat equation, for which we could not find a precise proof in the literature.\medskip

Recall that we defined $\N^* := \N\setminus\{0\}$. For any $k\in\N$, we define the moments $\rho_k := \sum_{i = 1}^{+\infty}i^k c_i$, we also naturally define $\rho_{k, \init}$ for the initial value of the moments. For some $k\in\N$, we denote the sequence $C_k := (i^k c_i)_{i\in\N^*}$, we extend it by $0$ for $i\leq 0$ if needed. Finally, for a bounded domain $\Omega$ and some $T > 0$, we introduce the notation $\Omega_T := [0, T]\times \Omega$.

\section{Proof of \texorpdfstring{\cref{thm:main2}}{Theorem 1}}\label{sec:proof2}
\subsection{A duality lemma}

We begin by analyzing an abstract parabolic problem in non-divergence form. We provide estimates that do not depend on the regularity of the coefficient of the equation.

\begin{prop}\label{thm:dual-estimate}
  Let $T > 0$ and let $\Omega$ be a $\fsC^2$ bounded domain of $\R^d$, $d\leq 2$. Let $\mu = \mu(t, x) : [0, T]\times\overline\Omega \to [a, b]$ be a smooth coefficient, where $0 < a < b < +\infty$ are two constants. Let $p > 2$ and $f, H$ be two smooth functions $[0, T]\times\overline\Omega \to \R$. Let $w$ be a $\fsC^2([0, T]\times\overline\Omega)$ solution to
  \begin{equation}\label{eq:dual}
    \left\{
    \begin{aligned}
      &\partial_t w - \mu \Delta w = wf + H\quad
      &&\text{in } \Omega_T,\\
      &\nabla w \cdot n = 0 \quad
      &&\text{on } [0, T]\times\partial\Omega.\\
      &w(0) = 0 \quad
      &&\text{in }\Omega.
    \end{aligned}
    \right.
  \end{equation} 
  
  Then, there exist constants $C_T, \, C'_T$ that depend on $a$, $b$, $\Omega$, $d$, $T$, $p$ such that for $q := \frac{2p}{p - 2}$, one has  
  \begin{equation*}
    \|w\|_{\fsL^\infty([0, T], \fsL^q(\Omega))}\leq C'_T e^{C_T\|f \|_{\fsL^p(\Omega_T)}^q}\|H\|_{\fsL^2(\Omega_T)}.
  \end{equation*}
\end{prop}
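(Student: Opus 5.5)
Divide \eqref{eq:dual} by $\mu$ and test against $-\Delta w$; this is the standard estimate for non-divergence equations with merely bounded coefficients. Since $\nabla w\cdot n=0$ on $\partial\Omega$, we have $\int_\Omega \partial_t w\,(-\Delta w)/\mu$ but with the weight $1/\mu$ this no longer produces a clean time derivative of $\|\nabla w\|_2^2$. So I will instead test the equation directly with $-\Delta w$, without dividing:
\begin{equation*}
  \frac12\frac{\dd}{\dd t}\|\nabla w\|_{\fsL^2(\Omega)}^2 + \int_\Omega \mu |\Delta w|^2 = -\int_\Omega (wf+H)\Delta w .
\end{equation*}
Because $\mu\ge a$, Young's inequality gives
\begin{equation*}
  \frac{\dd}{\dd t}\|\nabla w\|_2^2 + a\|\Delta w\|_2^2 \le \frac{2}{a}\bigl(\|wf\|_2^2+\|H\|_2^2\bigr).
\end{equation*}
The key point is that no derivative of $\mu$ appears, so the constants depend only on $a$ (and on $b$ only for comparability where needed). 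This is where $d\le2$ and the $\fsL^2$ framework enter.

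Next I control $\|wf\|_{\fsL^2(\Omega)}\le \|f\|_{\fsL^p(\Omega)}\|w\|_{\fsL^q(\Omega)}$ with $1/2=1/p+1/q$. In dimension $d\le2$, $\fsH^1(\Omega)\hookrightarrow\fsL^q(\Omega)$ for every finite $q$, so
\begin{equation*}
  \|w\|_q\le C(\|\nabla w\|_2+\|w\|_2).
\end{equation*}
To close, I also need an $\fsL^2$ bound on $w$ itself, which I get by integrating the equation. Alternatively, I add $\|w\|_2^2$ to the energy: testing with $w$ produces $\int \mu\Delta w\, w$, which contains $\nabla\mu$ and is bad. Instead I use $\frac{\dd}{\dd t}\|w\|_2^2\le 2\|w\|_2\|\partial_t w\|_2$ together with $\|\partial_t w\|_2\le b\|\Delta w\|_2+\|wf\|_2+\|H\|_2$, and absorb via Young, using the dissipation $a\|\Delta w\|_2^2$. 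Setting
\begin{equation*}
  E(t):=\|w(t)\|_{\fsH^1}^2 ,
\end{equation*}
this yields
\begin{equation*}
  E'\le C\bigl(1+\|f(t)\|_{p}^2\bigr)E + C\|H(t)\|_2^2 ,
\end{equation*}
with $C=C(a,b,\Omega,p)$.

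Gronwall with $E(0)=0$ then gives
\begin{equation*}
  \sup_t E\le C\exp\Bigl(C\int_0^T\bigl(1+\|f\|_p^2\bigr)\Bigr)\|H\|_{\fsL^2(\Omega_T)}^2 .
\end{equation*}
Since $p>2$, Hölder in time gives $\int_0^T\|f\|_p^2\le T^{1-2/p}\|f\|_{\fsL^p(\Omega_T)}^2$. This is bounded by $C_T(1+\|f\|_{\fsL^p(\Omega_T)}^q)$ because $q>2$, and so it fits the stated form $e^{C_T\|f\|^q}$; the exact exponent is not essential. Finally, Sobolev embedding gives $\|w\|_{\fsL^\infty\fsL^q}\le C\sup_t E^{1/2}$, which is the claimed bound.

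The main obstacle is the first step, obtaining an energy estimate independent of the regularity of $\mu$. Testing with $-\Delta w$ (rather than with $w$ or $-\Delta w/\mu$) is exactly what avoids any $\nabla\mu$ or $\partial_t\mu$ terms. Handling the lower-order $\|w\|_2$ part of the $\fsH^1$ norm via $\partial_t w$ is the delicate bookkeeping. One further check is needed at that step: with the Neumann condition, $\|\nabla w\|_2$ controls $w$ only modulo constants, which is why a separate $\fsL^2$ control of $w$ is required.
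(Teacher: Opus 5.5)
Your proof is correct, but it takes a different route from the paper's.

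\textbf{Choice of multiplier.} The paper tests with $\partial_t w/\mu$ rather than $-\Delta w$. Both multipliers produce the same term $-\int_\Omega \partial_t w\,\Delta w=\frac12\frac{\dd}{\dd t}\|\nabla w\|_{\fsL^2(\Omega)}^2$ (via the Neumann condition), and neither differentiates $\mu$. The difference is the coercive term: the paper gets $\int_\Omega|\partial_t w|^2/\mu$, you get $\int_\Omega\mu|\Delta w|^2$.
\begin{itemize}
\item The paper's choice gives a one-line $\fsL^2$ bound, $\|w(t)\|_{\fsL^2(\Omega)}^2\le T\int_0^t\|\partial_s w\|_{\fsL^2(\Omega)}^2$, directly from $w(0)=0$.
\item You instead have to recover $\partial_t w$ from $\Delta w$ through the equation, at the cost of a factor $b$, and absorb it into the dissipation $a\|\Delta w\|_{\fsL^2}^2$. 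This is slightly more bookkeeping.
\end{itemize}

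\textbf{Gronwall step.} The paper uses H\"older in space-time, $\|wf\|_{\fsL^2(\Omega_t)}\le\|w\|_{\fsL^q(\Omega_t)}\|f\|_{\fsL^p(\Omega_T)}$. It then raises the Sobolev estimate to the power $q$ and runs Gronwall on $\phi(t)=\|w(t)\|_{\fsL^q(\Omega)}^q$. That is the source of the factor $e^{C\|f\|_{\fsL^p(\Omega_T)}^q}$. You apply H\"older pointwise in time and run a linear Gronwall on the $\fsH^1$ energy, with the time-integrable weight $\|f(t)\|_{\fsL^p(\Omega)}^2\in\fsL^1(0,T)$ (using $p>2$). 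This gives the better factor $\exp\bigl(CT^{1-2/p}\|f\|_{\fsL^p(\Omega_T)}^2\bigr)$, quadratic rather than $q$-th power in the exponent. As a by-product you also get an $\fsL^2(\Omega_T)$ bound on $\Delta w$.

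\textbf{What is common.} Both versions depend on $\mu$ only through $a$ and $b$, which is exactly what the subsequent duality lemma needs.

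\textbf{Minor remark.} Testing with $w$ is not actually problematic if you do not integrate by parts. The bound $\bigl|\int_\Omega\mu\,\Delta w\,w\bigr|\le b\|\Delta w\|_{\fsL^2}\|w\|_{\fsL^2}$ is precisely what your $\partial_t w$ detour amounts to. So your preliminary remarks about $\nabla\mu$ and about ``integrating the equation'' can be dropped.
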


\begin{proof}
  Let $0 < t < T$, since we are in a smooth framework, one can test \eqref{eq:dual} with $\frac{\partial_s w}{\mu}$, to obtain
  \begin{align*}
    \int_0^t\int_\Omega \frac{|\partial_s w|^2}{\mu} + \frac12 \int_\Omega |\nabla w(t)|^2
    &= \int_0^t\int_\Omega \left(wf\frac{\partial_s w}{\mu} + H \frac{\partial_s w}{\mu}\right)\\
    &\leq \frac12 \int_0^t\int_\Omega \left(\frac{w^2f^2 + H^2}{\mu}\right) + \frac12\int_0^t\int_\Omega\frac{(\partial_s w)^2}{\mu}.
  \end{align*}
  To obtain the last line, we used Young's inequality.

  Using the bounds on $\mu$, we obtain
  \begin{equation}\label{eq:energy}
    \frac{1}{b} \int_0^t\int_\Omega|\partial_s w|^2 + \int_\Omega |\nabla w(t)|^2
    \leq \frac{1}{a}\int_0^t\int_\Omega \left(w^2f^2 + H^2\right).
  \end{equation}
  
  Then, using Jensen inequality 
  \begin{align}\label{eq:meancontrol}
    \int_\Omega |w(t, x)|^2
    &= \int_\Omega \left|\int_0^t \partial_s w\right|^2\nonumber\\
    &\leq T \int_0^t\int_\Omega |\partial_s w|^2\\
    &\leq \frac{bT}{a}\int_0^t\int_\Omega \left(w^2f^2 + H^2\right).\nonumber
  \end{align}

  We notice by Hölder's inequality that
  \begin{equation}\label{eq:holderwf}
    \|wf\|_{\fsL^2(\Omega_t)} \leq \|w\|_{\fsL^q(\Omega_t)} \|f\|_{\fsL^p(\Omega_T)},
  \end{equation}
  where we used that $\frac{1}{p} + \frac1q = \frac12$.

  One can then use a Sobolev inequality in dimension $d\leq 2$ to deduce from \eqref{eq:energy}, \eqref{eq:meancontrol} and \eqref{eq:holderwf} that
  \begin{align}\label{eq:Poincare-Sobolev}
    \|w(t)\|_{\fsL^q(\Omega)}
    &\leq C_\Omega\left(\|\nabla w(t)\|_{\fsL^2(\Omega)} + \|w(t)\|_{\fsL^2(\Omega)}\right)\nonumber\\
    &\leq C_\Omega\left(\sqrt{\frac1a} + \sqrt{\frac{bT}{a}}\right) \left[\|w\|_{\fsL^{q}(\Omega_t)} \|f \|_{\fsL^p(\Omega_T)} + \|H\|_{\fsL^2(\Omega_T)}\right],
  \end{align}
  where $C_\Omega$ is a constant that depends on $\Omega$, $d$ and $q$.

  Thus, defining $\phi = \int_\Omega w^q$ and using \eqref{eq:Poincare-Sobolev}, we obtain for a constant $K_1 := C_\Omega\left(\sqrt{\frac1a} + \sqrt{\frac{bT}{a}}\right)$
  \begin{align*}
    \phi(t)^{1/q}
    &\leq K_1\left(\|w\|_{\fsL^{q}(\Omega_t)} \|f \|_{\fsL^p(\Omega_T)} + \|H\|_{\fsL^2(\Omega_T)}\right)\\
    &\leq K_1 \left[\left(\int_0^t \phi\right)^{1/q} \|f \|_{\fsL^p(\Omega_T)} + \|H\|_{\fsL^2(\Omega_T)}\right].
  \end{align*}

  Raising to the power $q$, we deduce that
  \begin{align*}
    \phi(t)
    &\leq \left(2K_1\right)^q\left(\|f \|_{\fsL^p(\Omega_T)}^q\int_0^t \phi + \|H\|_{\fsL^2(\Omega_T)}^q\right).
  \end{align*}
  
  Gronwall's inequality then gives for $0 \leq t \leq T$ the bound
  \begin{equation*}
    \phi(t) \leq \left(2K_1\right)^qe^{TK_1^q 2^q\|f \|_{\fsL^p(\Omega_T)}^q}\|H\|_{\fsL^2(\Omega_T)}^q.
  \end{equation*}

  Thus, we have proved that
  \begin{equation*}
    \|w\|_{\fsL^\infty([0, T], \fsL^q(\Omega))}\leq 2K_1e^{\frac{TK_1^q 2^q}{q}\|f \|_{\fsL^p(\Omega_T)}^q}\|H\|_{\fsL^2(\Omega_T)}.
  \end{equation*}
\end{proof}

We can now return to the problem of interest.

\begin{proof}[Proof of \cref{thm:duality-lemma}]
  Let $H := H(t, x)\geq 0$ be a smooth function. We consider the non-negative solution to the dual backward in time problem 
  \begin{equation*}
    \left\{
    \begin{aligned}
      &\partial_t w + \mu \Delta w = - wf - H\quad
      &&\text{in } \Omega_T,\\
      &\nabla w \cdot n = 0 \quad
      &&\text{on } [0, T]\times\partial\Omega,\\
      &w(T) = 0 \quad
      &&\text{in }\Omega.
    \end{aligned}
    \right.
  \end{equation*}
  Since we are working in a $\fsC^2$ setting, the existence of $w$ is standard (see for instance the discussion in the Appendix of \cite{bredendesvillettes2017} for a similar problem, if one wants to work with rough coefficients).\medskip

  After reversing time, we can apply the estimates developed in \cref{thm:dual-estimate}, and deduce that for some constant $C$ depending on $a$, $b$, $T$, $d$, $\Omega$, $p$, and $\|f\|_{\fsL^p(\Omega_T)}$, one has
  $$\|w(0)\|_{\fsL^2(\Omega)} \leq |\Omega|^{\frac{1}{p}} \|w(0)\|_{\fsL^q(\Omega)} \leq C \|H\|_{\fsL^2(\Omega)},$$
  where $q > 2$ is defined in \cref{thm:dual-estimate}.

  We then note that (using $w\geq 0$)
  \begin{align*}
    \frac{\dd }{\dd t} \int_\Omega wu 
    &= \int_\Omega (\partial_t w)u + w (\partial_t u)\\
    &\leq \int_\Omega (- \mu \Delta w - fw - H)u + w(\Delta (\mu u) + fu)\\
    &= -\int_\Omega Hu.
  \end{align*}

  Hence, using $w(T) = 0$
  \begin{equation*}
    \int_0^T\int_\Omega H u \leq \int_\Omega w(0)u_\init \leq \|w(0)\|_{\fsL^2(\Omega)} \|u_\init\|_{\fsL^2(\Omega)} \leq C \|H\|_{\fsL^2(\Omega)}\|u_\init\|_{\fsL^2(\Omega)}. 
  \end{equation*}

  Using $u\geq 0$ and the density of smooth functions in $\fsL^2(\Omega_T)$, a duality argument yields that
  \begin{equation*}
    \|u\|_{\fsL^2(\Omega_T)} \leq C \|u_\init\|_{\fsL^2(\Omega)}.
  \end{equation*}
\end{proof}

\begin{rema}
  We stated \cref{thm:duality-lemma} in a smooth ($\fsC^2$) setting, since we only use it on an approximate problem. However, it is clear that it holds under weaker hypotheses, in particular $\mu$ may be assumed to be only measurable.
\end{rema}

\subsection{End of the proof of \texorpdfstring{\cref{thm:main2}}{Theorem 1}}\label{sec:endproof}
We can now apply this duality lemma to the study of system \eqref{eq:coagulation}.\medskip

\begin{proof}[Proof of \cref{thm:main2}]
  We will prove the existence of a solution on $[0, T]\times\Omega$ for any $T > 0$. It can be easily checked that these solutions are identical on the shared domain of definition and define thus a global solution. Therefore, we consider in the following an arbitrary $T > 0$.

  To make all computations rigorous, we consider the following truncated system: for some $N > 0$, we define $a^N_{ij} := a_{i, j}\indic{i + j \leq N}$, we also restrict the system to $c_i$ with $i\leq N$. Classical parabolic theory then ensures the existence for any $T > 0$ of a \textit{non-negative smooth solution to the truncated system}. The estimates below are uniform in $N$, and therefore provide uniform smoothness estimates for the relevant quantities (we note that all the functions that we consider being smooth, we can apply \cref{thm:duality-lemma}). An application of the Arzelà--Ascoli theorem allows us to deduce the existence of a smooth solution to \eqref{eq:coagulation}. Uniqueness then follows immediately. Regarding the absence of gelation, it comes from the uniform control of a higher moment (as $\rho_2$) in any Lebesgue space. Since this approximation procedure is standard (see for instance \cite{canizodesvillettes2010}), we do not describe it explicitly and we focus on uniform smoothness of the solution.

  Using the weak formulation \eqref{eq:weak} and \eqref{eq:weakF} (as well as \ref{ass:massB}), we note that $\sum_{i = 1}^{+\infty} iQ_i=\sum_{i = 1}^{+\infty} iF_i = 0$. Summing \eqref{eq:coagulation} (with a factor $i$) we see that the following equation is satisfied
  \begin{equation*}
    \partial_t \left(\sum_{i=1}^{+\infty} i c_i\right) - \Delta \left(\sum_{i=1}^{+\infty} id_i c_i\right) = 0.
  \end{equation*}
  Hence, if we define $\mu_1 := \frac{\sum_{i=1}^{+\infty} id_i c_i}{\sum_{i=1}^{+\infty} i c_i}$, we obtain the equation for $\rho_1$
  \begin{equation*}
    \left\{
      \begin{aligned}
        &\partial_t \rho_1 - \Delta(\mu_1\rho_1) = 0\quad
        &&\text{in } \Omega_T,\\
        &\nabla(\mu_1 \rho_1)\cdot n = 0\quad
        &&\text{on } [0, T]\times\partial\Omega,\\
        &\rho_1(0) = \rho_{1,\init}\quad
        &&\text{in }\Omega.
      \end{aligned}
    \right.
  \end{equation*}
  Furthermore, using the non-negativity of the $c_j$, one can check that $\underline D \leq \mu_1 \leq \overline D$ thanks to \ref{ass:boundd}. By the improved duality lemma of \cite[Proposition 1.1]{canizodesvillettes2014}, we deduce that there exist some $p>2$ and $C > 0$ depending on $\Omega$, $d$, $\underline D$, $\overline D$, $T$ and $\|\rho_{1, \init}\|_{\fsL^p(\Omega)}$ such that $\|\rho_1\|_{\fsL^p(\Omega_T)} \leq C$. \medskip

    Let $k\in\N\setminus\{0, 1\}$, then by the weak formulation \eqref{eq:weak} and \eqref{eq:weakF}
  \begin{align*}
    \partial_t \left(\sum_{i=1}^{+\infty} i^k c_i\right)- \Delta\left(\sum_{i=1}^{+\infty} d_i i^k c_i\right)
    &= \frac12 \sum_{i = 1}^{+\infty}\sum_{j=1}^{+\infty} a_{i, j}\left((i + j)^k - i^k - j^k\right)c_i c_j\\
    &\phantom{=} + \sum_{i = 2}^{+\infty}B_{i}c_i \left(\sum_{j = 1}^{i - 1}\beta_{i, j}j^k - i^k\right)\\
    & =: K + F.
  \end{align*}

  Concerning the fragmentation part, we have
  \begin{align*}
    F
    &\leq \sum_{i = 2}^{+\infty}B_{i}c_i \left(\sum_{j = 1}^{i - 1}\beta_{i, j}j - i\right) i^{k - 1} = 0,
  \end{align*}
  where we used \ref{ass:massB} to obtain the last equality.\medskip

  Using \ref{ass:bounda}, we can upper bound the coagulation part
  \begin{align*}
    K
    &\leq \frac{C_a}{2} \sum_{i = 1}^{+\infty}\sum_{j=1}^{+\infty} (i + j)\left((i + j)^k - i^k - j^k\right)c_i c_j.
  \end{align*} 

  We use Newton's binomial formula and Young's inequality under the form $(i + j)i^lj^{k - l} \leq C_k(i^kj + ij^k)$ for any $i, \, j\in\N^*$, $1\leq l \leq k - 1$ and for some constant $C_k > 0$, to deduce that
  \begin{align*}
    \frac{C_a}{2} \sum_{i = 1}^{+\infty}\sum_{j=1}^{+\infty} (i + j)\left((i + j)^k - i^k - j^k\right)c_i c_j
    &\leq C \sum_{i = 1}^{+\infty}\sum_{j=1}^{+\infty} (i^kj + ij^k)c_i c_j\\
    &\leq C \rho_k \rho_1,
  \end{align*} 
  where $C$ is a constant that may change from line to line and that depends on $k$ and $C_a$.\medskip

  We obtain the following equation for $\rho_k$
    \begin{equation*}
    \left\{
      \begin{aligned}
        &\partial_t \rho_k - \Delta(\mu_k\rho_k) \leq C\rho_1\rho_k\quad
        &&\text{in } \Omega_T,\\
        &\nabla (\mu_k \rho_k)\cdot n = 0\quad
        &&\text{on } [0, T]\times\partial\Omega,\\
        &\rho_k(0) = \rho_{k,\init}\quad
        &&\text{in }\Omega,
      \end{aligned}
    \right.
  \end{equation*}
  where $\mu_k := \frac{\sum_{i=1}^{+\infty} i^kd_i c_i}{\sum_{i=1}^{+\infty} i^k c_i}$ satisfies the bound $\underline D\leq \mu_k \leq \overline D$.\medskip

  We apply \cref{thm:duality-lemma} to deduce that for some constant $C_k$ that depends on $\Omega$, $d$, $\underline D$, $\overline D$, $C_a$, $k$, $\|\rho_{k, \init}\|_{\fsL^2(\Omega)}$ and $\|\rho_{1, \init}\|_{\fsL^p(\Omega)}$ (where $p$ is defined previously), one has $\|\rho_k\|_{\fsL^2(\Omega_T)} \leq C_k$.\medskip

  We can now improve these estimates in order to obtain the full regularity stated in the theorem. We first show that all the moments are bounded in $\fsL^\infty(\Omega_T)$.
  \medskip

  We define the sequence $q_r$ by $q_1 := p$ and for $r\in\N^*$, $\frac{1}{q_{r + 1}} := \frac{2}{q_r} - \frac{1}{2}$. Since $q_1 > 2$, we see that this sequence becomes non-positive in a finite number of steps. \medskip

  We first notice that \ref{ass:massB} implies that $\beta_{i + j, i} \leq i + j$ for any $i,\, j\in\N^*$. Let $i \in\N^*$ and $l \in \N$. Using \ref{ass:bounda} and \ref{ass:boundB}, we deduce
  \begin{align*}
    (\partial_t - d_i \Delta)i^lc_i
    &\leq i^l\left(Q_i^+ + F_i^+\right)\\
    &\leq \left(\frac{C_a}{2} + C_B\right) \left(i^{l + 1} \sum_{j = 1}^{i - 1} c_j c_{i - j} + \sum_{j = 1}^{+\infty} (i + j)^{l + m + 1}c_{i + j}\right).
  \end{align*}

  We now use the following decomposition
  \begin{equation*}
    i^{l + 1} \sum_{j = 1}^{i - 1} c_j c_{i - j} = \sum_{j = 1}^{\lfloor \frac{i}{2}\rfloor} c_j (i^{l + 1}c_{i - j}) + \sum_{j = \lfloor \frac{i}{2}\rfloor + 1}^{i - 1} (i^{l+1}c_j) c_{i - j}.
  \end{equation*}

  We note that $i \leq 2 (i - j)$ if $j \leq \lfloor \frac{i}{2}\rfloor$ and $i \leq 2 j$ if $j \geq \lfloor \frac{i}{2}\rfloor + 1$. Hence, we obtain
  \begin{align*}
    i^{l + 1} \sum_{j = 1}^{i - 1} c_j c_{i - j}
    &\leq 2^{l +1}\sum_{j = 1}^{\lfloor \frac{i}{2}\rfloor} c_j ((i - j)^{l + 1}c_{i - j})
    + 2^{l +1}\sum_{j = \lfloor \frac{i}{2}\rfloor + 1}^{i - 1} (j^{l+1}c_j) c_{i - j}\\
    &\leq 2^{l +1}\left[\max_{0 < j \leq \lfloor \frac{i}{2}\rfloor} c_{j} \right]\sum_{j = 1}^{\lfloor \frac{i}{2}\rfloor} c_{i - j} (i - j)^{l + 1}
    + 2^{l +1}\left[\max_{\lfloor \frac{i}{2}\rfloor + 1\leq j \leq i - 1} c_{i - j} \right]\sum_{j = \lfloor \frac{i}{2}\rfloor + 1}^{i - 1} j^{l + 1}c_j\\
    &\leq 2^{l +2}\rho_1\rho_{l + 1}.
  \end{align*}

  To obtain the last line, we used that $c_j \leq \rho_0\leq \rho_1$ for any $j\in\N^*$. We then notice that $\sum_{j = 1}^{+\infty} (i + j)^{l + m + 1}c_{i + j}\leq \rho_{l + m + 1}$. Thus, defining $K := 2^{l + 2}\left(\frac{C_a}{2} + C_B\right)$, we have shown that
  \begin{align}\label{eq:boundci}
    (\partial_t - d_i \Delta)i^lc_i
    \leq K \left(\rho_{l + 1}\rho_1 + \rho_{l + m + 1}\right).
  \end{align}

  Since $\rho_{l + m + 1}, \, \rho_{l + 1} \in\fsL^{2}(\Omega_T)$ and $\rho_1\in\fsL^p(\Omega_T)$, we deduce that $0 \leq \rho_{l + 1}\rho_1 + \rho_{l + m + 1}$ is bounded in $\fsL^{p_1}(\Omega_T)$, where $\frac{1}{p_1} = \frac{1}{p} + \frac12$. We recall that in dimension $d\leq 2$, the solution to the Neumann heat equation with a forcing in $\fsL^\kappa{(\Omega_T)}$ is bounded in $\fsL^\eta{(\Omega_T)}$ with $\frac{1}{\eta} = \frac{1}{\kappa} - \frac12$ (provided that $1 < \kappa < 2$), see \cref{thm:smoothing} in Appendix~\ref{sec:smoothing}. Using \ref{ass:boundd} and the non-negativity of $c_i$, one deduces that there exists a constant $C_l > 0$ such that for all $i\in\N^*$, one has $\|i^l c_i\|_{\fsL^{q_1}(\Omega_T)} \leq C_l$. In particular $\rho_{l - 2}$ is bounded in $\fsL^{q_1}(\Omega_T)$.
  
  We can now iterate this argument. We assume that all the $\rho_k$ are bounded in $\fsL^{q_r}(\Omega_T)$. Thanks to \eqref{eq:boundci}, for any $l\in\N$, $i^lc_i \geq 0$ solves a Neumann heat equation with a right-hand side, whose positive part is bounded by $\rho_{l + 1}\rho_1 + \rho_{l + m + 1}$. This forcing lies in $\fsL^{\frac{q_r}{2}}(\Omega_T)$ by the induction hypothesis. It follows that $i^lc_i$ is uniformly bounded in $i$ in $\fsL^{q_{r+1}}(\Omega_T)$ (the regularity argument is valid as long as $q_{r} < 4$). Iterating eventually one last time (or twice if we reach $q_r = 4$), we deduce that all the $\rho_k$ are bounded in $\fsL^\infty(\Omega_T)$, by the smoothing effect of the heat equation.\medskip

  We can now iteratively show that for any $r\in \N$, $k \geq 1, 1 < p < +\infty$, there exists a constant $C_{k, p, r}$ such that for all $i \in \N^*$, $\|i^kc_i\|_{W^{r, p}(\Omega_T)} \leq C_{k, p, r}$, where $W^{r, p}(\Omega_T)$ is a parabolic Sobolev space. We also refer to \cite{bredendesvillettes2017} for a similar induction. We consider again the equation satisfied by $i^k c_i$
  $$\left(\partial_t  - d_i \Delta \right)i^k c_i = i^k\left(Q_i + F_i\right). $$
  By the moment bounds in $\fsL^\infty(\Omega_T)$, $i^k Q_i$ and $i^k F_i$ are bounded uniformly in $i$ in all $\fsL^p(\Omega_T)$ for $1 < p < +\infty$. Indeed, we have already shown that the positive part $i^kF_i^+$ and $i^kQ_i^+$ are bounded by well-chosen moments. The bound on $|i^kF_i^-| = B_ii^kc_i \leq \rho_{k + m}$ is immediate. Finally, we can bound $Q_i^-$
  \begin{align*}
    |i^kQ_i^-|\leq i^kc_i\sum_{j = 1}^{+\infty}(i + j) c_j= i^{k + 1}c_i\rho_0 + i^kc_i\rho_1 \leq 2\rho_1\rho_{k +1}.
  \end{align*}
  
  Using classical regularity theory, one deduces that $i^k c_i$ is bounded (uniformly in $i$) in $W^{2, p}(\Omega_T)$ for any $1 < p < +\infty$.
  
  We deduce from this estimate that $i^k Q_i$ and $i^k F_i$ are bounded uniformly in $i$ in $\fsW^{2, p}(\Omega_T)$ for any $1 < p < +\infty$. We detail the computations on the example of the term $i^k Q_i^-$. For any \mbox{$a, \, b \in  \{x_1, \, x_2\}$} (for $d=2$), we can compute $$\partial_a\partial_b (i^k Q_i^-) = -i^k\sum_{j = 1}^{+\infty} a_{i, j} \partial_a\partial_b (c_i c_j).$$
  We notice that $\partial_a\partial_b (c_i c_j) = (\partial_a\partial_b c_i) c_j + (\partial_a\partial_b c_j) c_i + \partial_a c_i \partial_b c_j + \partial_b c_i \partial_a c_j$. By the previous estimate, for any $1 < p < +\infty$,  $i^{k+1}\partial_a\partial_b c_i$ is bounded uniformly in $i$ in $\fsL^p(\Omega_T)$, whereas $j^3 c_j$ is bounded uniformly in $j$ in $\fsL^{\infty}(\Omega_T)$, by the $\fsL^\infty(\Omega_T)$ bound proved previously. Thus, $i^{k+1}(\partial_a\partial_b c_i)j^3c_j$ is bounded uniformly in $i$ and $j$ in $\fsL^p(\Omega_T)$. This bound holds also for the three other terms. We hence deduce that $\partial_a\partial_b (i^k Q_i^-)$ is bounded uniformly in $i$ in $\fsL^p(\Omega_T)$. The same holds for the terms $Q_i^+$, $F_i$ and for the regularity in time.

  Hence, $i^k Q_i$ and $i^k F_i$ are bounded uniformly in $i$ in $\fsW^{2, p}(\Omega_T)$ for any \mbox{$1 < p < +\infty$}. Using once again classical regularity theory, we deduce that the result holds for $r=4$. Iterating this argument, we obtain the result for all $r$ and $1 < p < +\infty$.\medskip

  Finally, using Sobolev embedding, we obtain the desired conclusion.
\end{proof}

\section{Proof of \texorpdfstring{\cref{thm:maind}}{Theorem 2}}\label{sec:proofd}
As previously, we consider an arbitrary $T > 0$. Similarly to the proof of \cref{thm:main2}, we first prove that all the moments are bounded in $\fsL^p(\Omega_T)$ for some $p$ and we then upgrade this estimate to higher regularity. However, since we work in dimension $d \leq 4$, the bootstrap argument works only once we have a bound on the moments in $\fsL^p(\Omega_T)$ for $p > 3$. For this reason, the duality methods used in the previous section are insufficient. We replace them with the two following lemmas.\medskip

The first one is a lemma in the spirit of the Krylov-Safonov theory. It shows that the solution to a parabolic equation in non-divergence form with rough coefficients enjoys some Hölder regularity. Under the assumption that the solution is (pointwise) non-decreasing, an elementary proof is provided in \cite{boutondesvillettes2025}. Furthermore, this assumption allows to treat the optimal range of forcing $f$, whereas the classical Krylov-Safonov theory requires additional integrability. This improvement will be crucial in the following.

\begin{prop}[Theorem 1 from \cite{boutondesvillettes2025}]\label{thm:main-neumann}
  We consider a bounded, $\fsC^2$ domain \(\Omega \subset \R^d\).  Set \(T>0\), and   \(p, q \in [1, \infty]\) such that $\gamma := 2 - \frac2p - \frac{d}q > 0$. We also consider two constants $a_0 > 0, c_0 > 1$. Let \(f \in \fsL^{p}((0,T], \fsL^q(\Omega))\) and let \(a := a(t,x)\) be a measurable coefficient satisfying the bound \(a_0\leq a\leq a_0c_0\). Finally, we consider Lipschitz initial data $w_\init\in\Lip(\Omega)$.

  Let \(w \ge 0\) be a solution of
  \begin{equation*}
    \left\{
      \begin{aligned}
        &a(t, x)\partial_t w(t, x) - \Delta w(t, x) = f(t, x)\quad\text{and }\quad \partial_t w(t, x) \geq 0\quad
        &&\text{in }[0, T]\times\Omega,\\
        &\nabla w \cdot n = 0\quad
        &&\text{on }[0, T]\times\partial\Omega,\\
        &w(0) = w_\init\quad
        &&\text{in } \Omega.
      \end{aligned}
    \right.
  \end{equation*}
  
  Then there exists a constant \(\alpha >0\) depending only on \(\gamma, \, d,\, c_0\), and a constant
  \(C_*\) depending on \(p,\, q,\, d,\,  \Omega,\, T,\, a_0,\, c_0\) such that $w$ lies in \(\fsC^{\alpha}([0,T] \times {\overline{\Omega}} ) \), and, moreover,
  the estimate holds:
  \begin{align*}
    \| w \|_{\fsC^\alpha([0,T] \times {\overline{\Omega}} ) }
    \leq
    C_*    \left(
      \| f \|_{\fsL^p((0,T], \fsL^{q}(\Omega))}
      +
      \| w_\init \|_{\mathrm{Lip}(\Omega)}
    \right).
  \end{align*}
\end{prop}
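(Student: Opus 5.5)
This is Theorem 1 of \cite{boutondesvillettes2025}; the paper cites rather than reproves it, but here is how I would prove it. The monotonicity $\partial_t w\ge 0$ turns the parabolic problem into a family of elliptic problems. For each fixed $t$, $w(t)$ solves the Neumann problem
\[-\Delta w(t)=f(t)-a(t)\partial_t w(t)=:g(t).\]
The term $a\partial_t w\ge 0$ is controlled in $\fsL^1$ through the equation itself. Integrating over $\Omega$ and using the Neumann condition gives
\[\int_\Omega a\,\partial_t w=\int_\Omega f,\]
hence $\|\partial_t w\|_{\fsL^1(\Omega_T)}\le a_0^{-1}\|f\|_{\fsL^1(\Omega_T)}$. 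The same sign also gives a maximum-principle-type bound: $w$ is nondecreasing in $t$, so $\sup w=\sup w(T)$, and $\|w\|_{\fsL^\infty}$ is bounded by $\|w_\init\|_{\Lip}$ plus a contribution of $f$. That contribution is controlled by the heat-equation smoothing with coefficient between $a_0$ and $a_0c_0$. I would get it by a De Giorgi or Moser iteration on $(w-k)_+$, which works precisely when $\gamma=2-\frac2p-\frac dq>0$.

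For Hölder continuity I would run an oscillation-decay argument on parabolic cylinders $Q_r=(t_0-c r^2,t_0]\times B_r(x_0)$, with the scaling adapted to $a$. On such a cylinder, monotonicity gives
\[\operatorname{osc}_{Q_r}w\le \operatorname{osc}_{B_r}w(t_0-cr^2)+\bigl(w(t_0)-w(t_0-cr^2)\bigr)\ \text{pointwise in } B_r.\]
I would combine two facts:
\begin{itemize}
\item a weak Harnack or measure-to-pointwise lemma for the supersolution $-\Delta w\ge f-a\partial_t w$;
\item an elliptic comparison: in space, $w(t_0)$ is subharmonic up to $f$, since $-\Delta w(t_0)\le f(t_0)$, while the time increment is controlled by $\int a\partial_t w$.
\end{itemize}
The dichotomy is then the standard one. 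Either $w$ increases substantially in time on a fixed fraction of the cylinder, which forces the lower envelope up. Or it does not, in which case $w$ is nearly stationary and the elliptic estimates give decay of the spatial oscillation. Either way one gets $\operatorname{osc}_{Q_{r/2}}\le(1-\theta)\operatorname{osc}_{Q_r}+Cr^{\gamma}\|f\|$, with $\theta$ depending only on $d$ and $c_0$. Iterating yields $\fsC^\alpha$ with $\alpha<\gamma$ depending on $\gamma,d,c_0$. Boundary cylinders are handled by flattening $\partial\Omega$ with the $\fsC^2$ chart and reflecting evenly, which the Neumann condition permits. The initial layer is handled by comparing with the Lipschitz datum via barriers $w_\init(x_0)\pm L|x-x_0|\pm Ct$.

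The main obstacle is the oscillation decay with merely measurable $a$ and with $f$ only in $\fsL^p_t\fsL^q_x$ at the sharp range $\gamma>0$. The Krylov–Safonov machinery (ABP estimate) would demand $f\in \fsL^{d+1}$. To reach the sharp range, I would try first to avoid ABP entirely. The idea is to use monotonicity to integrate in time, setting $W(t)=\int_0^t w$, so that everything reduces to elliptic estimates at each time slice. These slices are divergence-form and insensitive to $a$, so the only role of $a$ is in the $\fsL^1$ time-increment bound, and De Giorgi–type estimates for $-\Delta$ with $\fsL^q$ right side give the gain $r^{2-d/q}$. Integrating in time with Hölder in $t$ then contributes the $-2/p$ part of $\gamma$.
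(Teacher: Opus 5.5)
\textbf{There is a genuine gap, in the oscillation-decay step, which you flag as the main obstacle but do not resolve.} The tools you name for the two branches of your dichotomy are not available here.

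On a time slice, $-\Delta w(t_0)=f(t_0)-a\,\partial_t w(t_0)$ carries no usable information. For $p<\infty$, nothing controls $\|f(t_0)\|_{\fsL^q(\Omega)}$ at a given $t_0$. At fixed time, $a\,\partial_t w(t_0)$ has no bound in $\fsL^s(\Omega)$ with $s>d/2$; your proposal only controls $\partial_t w$ in $\fsL^1(\Omega_T)$.

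In your ``nearly stationary'' branch, smallness of the increment $w(t_0)-w(t_0-cr^2)$ controls a time integral of $\partial_t w$, not its value on any slice. So the elliptic estimates there have no input. A pointwise-in-time oscillation bound also cannot be recovered afterwards by ``integrating in time with Hölder''.

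A weak Harnack inequality for ``$-\Delta w\ge f-a\,\partial_t w$'' fails either way. If it is elliptic, it is empty: the right-hand side is merely integrable, and $-a\,\partial_t w$ is exactly the negative part that would need to be controlled. If it is parabolic, the measurable, time-dependent $a$ sits in front of $\partial_t w$. That is the non-divergence setting where energy methods break down and ABP is needed.

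Passing to $W=\int_0^t w$ does not help either. You get $-\Delta W(t)=\int_0^t f-\int_0^t a\,\partial_s w$, which at best gives spatial regularity of $W(t)$ and no Hölder modulus for $w=\partial_t W$. As it stands, $\theta=\theta(d,c_0)$ is asserted without an argument.

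\textbf{The missing idea is that monotonicity removes the rough coefficient altogether.} Since $\partial_t w\ge 0$ and $a_0\le a\le a_0c_0$,
\begin{equation*}
  a_0\,\partial_t w-\Delta w\;\le\; f\;\le\; a_0c_0\,\partial_t w-\Delta w\qquad\text{in }\Omega_T .
\end{equation*}
You already use the left inequality implicitly in your $\fsL^\infty$ step: testing with $(w-k)_+$ works only because $a\,\partial_t w\,(w-k)_+\ge a_0\,\partial_t\tfrac12 (w-k)_+^2$. The right inequality gives, symmetrically, the energy inequality for $(k-w)_+$ with constant $a_0c_0$.

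After the time change $t=a_0 s$, $w$ is a subsolution for $\partial_s-\Delta$ and a supersolution for $c_0\partial_s-\Delta$. Both are constant-coefficient, divergence-form operators. De Giorgi--Nash--Moser estimates hold for them with $f\in\fsL^p((0,T],\fsL^q(\Omega))$ provided $\frac1p+\frac{d}{2q}<1$, i.e.\ $\gamma>0$.

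Your dichotomy then closes with parabolic tools only. Set $M=\sup_{Q_r}w$ and $m=\inf_{Q_r}w$, on a cylinder whose shape is fixed in terms of $c_0$, with lower part $Q^-$ and upper part $Q^+$. Either $w\ge\frac{M+m}{2}$ or $w\le\frac{M+m}{2}$ on half of $Q^-$.
\begin{itemize}
\item In the first case, apply the weak Harnack inequality to $w-m$, a nonnegative supersolution of $c_0\partial_s-\Delta$.
\item In the second case, apply it to $M-w$, a nonnegative supersolution of $\partial_s-\Delta$ with forcing $-f$. This is where the subsolution half is needed.
\end{itemize}
This yields $\operatorname{osc}_{Q^+}w\le(1-\theta)\operatorname{osc}_{Q_r}w+Cr^{\gamma}\|f\|_{\fsL^p((0,T],\fsL^q(\Omega))}$ with $\theta=\theta(d,c_0)$. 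Hence $\alpha=\alpha(\gamma,d,c_0)$ is independent of $a_0$. Without monotonicity the two inequalities swap wherever $\partial_t w$ changes sign, which is why the general theory needs ABP and extra integrability of $f$.

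The paper itself does not reprove this proposition. It cites it and only remarks that monotonicity is what permits an elementary proof in this optimal range.
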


The second tool that will be needed is an interpolation result in the spirit of the Gagliardo-Nirenberg-Sobolev inequality. However, it replaces the negative Hölder bound by a slightly weaker bound that takes the form of an inequality. We state it in the form needed for our application and we refer to \cite{boutondesvillettes2025} for a more general statement.

\begin{lemm}[Proposition 9 of \cite{boutondesvillettes2025} with $p= 2, \, q= \frac{3 - \alpha}{2 - \alpha}2$]\label{thm:interpolation}
Let $\Omega \subset \R^d$ be a bounded \(\fsC^2\) domain with $d\leq 5$, let $\alpha \in (0,\frac12)$ and let $\eps_1 := \frac{3 - \alpha}{2 - \alpha}2 - 3 > 0$. Then for any $u,w : \Omega \to \R$
  such that $0 \le u \le\Delta w$, it holds that
  \begin{equation}\label{inppalk}
    \|u \|_{\fsL^{3+\eps_1}(\Omega)}
    \le  C_{d, \alpha}\, \| {w} \|_{\fsC^{\alpha}(\overline{\Omega}) }^{\frac{1}{3- \alpha}}\,
    \| \nabla u\|_{\fsL^2(\Omega)}^{\frac{2}{3+\eps_1}}
    +  C_{d,\alpha}\, \| {w} \|_{\fsC^{\alpha}(\overline{\Omega})},
  \end{equation}
  where $C_{d,\alpha}>0$ depends only on $d$, $\alpha$.\medskip
\end{lemm}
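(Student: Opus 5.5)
The plan is to read the hypothesis $0\le u\le \Delta w$ as a \emph{one-sided negative regularity} bound on $u$: since $u\ge 0$, it gives control of $u$ in a Besov-type space of order $\alpha-2$. We then interpolate this against the $\dot{\fsH}^1$ information carried by $\|\nabla u\|_{\fsL^2}$. Set $q:=3+\eps_1=\frac{2(3-\alpha)}{2-\alpha}$ and $\theta:=\frac1{3-\alpha}$, so that $1-\theta=\frac{2-\alpha}{3-\alpha}=\frac 2q$. A scaling check confirms that these exponents are the Gagliardo--Nirenberg ones for $\fsL^q\subset[\dot B^{\alpha-2}_{\infty,\infty},\dot{\fsH}^1]_\theta$, and this holds independently of $d$:
\[
-\tfrac dq=\theta(\alpha-2)+(1-\theta)\bigl(1-\tfrac d2\bigr).
\]
The restriction $\alpha<\frac12$ gives $q<\frac{10}{3}$, so that $q\le \frac{2d}{d-2}$ for $d\le5$. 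This is where the Sobolev exponent enters.

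\textbf{Step 1: Morrey-type bound on averages.} Let $B_r(x)\subset\Omega$ and let $\psi$ be a smooth cutoff with $\indic{B_r(x)}\le\psi\le\indic{B_{2r}(x)}$ and $|\Delta\psi|\lesssim r^{-2}$. Using $u\ge0$, $u\le\Delta w$ and two integrations by parts (there are no boundary terms because $\psi$ is compactly supported, and $\int\Delta\psi=0$), we get
\[
\int_{B_r}u\le\int\psi\,\Delta w=\int (w-w(x))\Delta\psi\lesssim \|w\|_{\fsC^\alpha} r^{\alpha}\,r^{-2}\,r^d .
\]
Hence $\fint_{B_r(x)}u\lesssim\|w\|_{\fsC^\alpha}r^{\alpha-2}$. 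If $w$ is not $\fsC^2$, we first mollify it; this is harmless.

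\textbf{Step 2: pointwise interpolation.} By the standard Poincaré/Riesz-potential estimate,
\[
u(x)\le \fint_{B_r(x)}u+C\int_{B_r(x)}\frac{|\nabla u(y)|}{|x-y|^{d-1}}\dd y\le C\|w\|_{\fsC^\alpha}r^{\alpha-2}+C\,M_r(x),
\]
where $M_r$ is the truncated Riesz potential of $|\nabla u|$. We then either optimise $r$ pointwise, Hedberg style, using the maximal function of $|\nabla u|$, or decompose dyadically in $r$ (Littlewood--Paley: low frequencies of $u$ are bounded by $\|w\|_{\fsC^\alpha}2^{k(2-\alpha)}$, high frequencies are bounded through $\|\nabla u\|_2$). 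The $\fsL^q$ norm is estimated by the layer-cake formula
\[
\int u^q=q\int_0^\infty \lambda^{q-1}|\{u>\lambda\}|\dd\lambda .
\]
At level $\lambda$, choose $r$ with $\|w\|_{\fsC^\alpha}r^{\alpha-2}\sim\lambda/2$. Then $\{u>\lambda\}\subset\{M_r>\lambda/2C\}$, and Chebyshev together with the $\fsL^2\to\fsL^{2^*}$ (or weak-type) bound on $M_r$ gives a bound in terms of $\|\nabla u\|_{\fsL^2}$. The exponent bookkeeping will produce $\|w\|_{\fsC^\alpha}^{\theta}\|\nabla u\|_{\fsL^2}^{1-\theta}=\|w\|_{\fsC^\alpha}^{\frac1{3-\alpha}}\|\nabla u\|_{\fsL^2}^{\frac2{3+\eps_1}}$. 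Since $\Omega$ is bounded, scales $r\gtrsim \operatorname{diam}\Omega$ are not available; they contribute the additive term $C_{d,\alpha}\|w\|_{\fsC^\alpha}$, because $\int_\Omega u\lesssim\|w\|_{\fsC^\alpha}$ by Step 1 at macroscopic scale.

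\textbf{Main obstacle: the boundary.} Near $\partial\Omega$ the ball $B_r(x)$ leaves $\Omega$, and integrating by parts against $\psi$ would produce boundary terms, because no Neumann condition on $w$ is assumed in the lemma. I would handle this first by using only balls $B_r(x)\subset\Omega$ with $r\le\frac12\operatorname{dist}(x,\partial\Omega)$, combined with a Whitney covering. For points at distance $\delta$ from the boundary, Poincaré would be applied on chains of Whitney cubes reaching into the interior at scale $\delta$, using that $\Omega$ is $\fsC^2$ and hence an extension/John domain. Alternatively, one can flatten the boundary and reflect $u$, which preserves $\|\nabla u\|_{\fsL^2}$ up to constants. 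The reflected $u$, however, only satisfies the Step 1 average bound for balls centred at distance $\gtrsim r$ from the boundary. Verifying that these interior averages still suffice in the layer-cake estimate is the delicate point, and it is where I expect most of the work.
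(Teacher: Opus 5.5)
The paper does not prove this lemma; it imports it from its companion work. Your argument therefore has to stand on its own. The interior part is right: Step~1, and the Riesz-potential bound on balls with $B_{2r}(x)\subset\Omega$. But the proposal stops exactly where the difficulty lies, and two of its concrete claims fail.

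First, you say large scales cost only $C_{d,\alpha}\|w\|_{\fsC^\alpha}$ \emph{because} $\int_\Omega u\lesssim\|w\|_{\fsC^\alpha}$. This is false. Step~1 only controls balls well inside $\Omega$, and a boundary layer can carry unbounded mass. On $\Omega=(0,1)$, take $u_\eps=\min(x,\eps)^{\alpha-2}$ and let $w_\eps$ be the $\fsC^2$ function with $w_\eps''=u_\eps$ and $w_\eps=-x^\alpha/(\alpha(1-\alpha))$ on $[\eps,1]$. Then $0\le u_\eps\le w_\eps''$ and $\|w_\eps\|_{\fsC^\alpha}$ stays bounded, yet $\int_0^1u_\eps\sim\eps^{\alpha-1}\to\infty$. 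Moreover $\|u_\eps\|_{\fsL^q}^q\sim\|u_\eps'\|_{\fsL^2}^2\sim\eps^{2\alpha-5}$, so this boundary family saturates the inequality; the boundary is not a lower-order effect. Reflection does not fix this, as you note yourself.

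Second, the layer-cake route does not give the claimed exponents. Take $\|M_r\|_{\fsL^2}\lesssim r\|\nabla u\|_{\fsL^2}$ and apply Chebyshev at level $\lambda$ with $r_\lambda=(2\|w\|_{\fsC^\alpha}/\lambda)^{1/(2-\alpha)}$. The integrand is then a multiple of $\lambda^{q-3-\frac{2}{2-\alpha}}=\lambda^{-1}$, which diverges logarithmically; this is the usual endpoint loss of weak-type bounds. If you use the $\fsL^2\to\fsL^{2^*}$ bound instead, the level-set estimate no longer depends on $r$. The information from $w$ is then lost at large levels, and you get the wrong power of $\|\nabla u\|_{\fsL^2}$.

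Both problems go away if you commit to the pointwise (Hedberg) optimisation and use a corkscrew ball instead of the ball centred at $x$.

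\textbf{Step A: a corkscrew ball.} Since $\partial\Omega$ is $\fsC^2$, there are $r_0,c>0$ with the following property. For every $x\in\Omega$ and $0<r\le r_0$ there is $y$ with $|x-y|\le r$ and $B_{2cr}(y)\subset\Omega$, such that the convex hull $S$ of $\{x\}\cup B_{cr}(y)$ lies in $\Omega$ (inside the interior cone at $x$).

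\textbf{Step B: the pointwise bound.} Apply the potential estimate for convex sets (Gilbarg--Trudinger, Lemma~7.16) on $S$ with the subset $B_{cr}(y)$. This gives $|u(x)-u_{B_{cr}(y)}|\lesssim\int_S|x-z|^{1-d}|\nabla u(z)|\dd z\lesssim r\,\mathcal M(|\nabla u|\indic{\Omega})(x)$. Step~1 gives $u_{B_{cr}(y)}\lesssim\|w\|_{\fsC^\alpha}r^{\alpha-2}$. Hence, for a.e.\ $x\in\Omega$ and all $r\le r_0$,
\[
u(x)\le C\bigl(\|w\|_{\fsC^\alpha}r^{\alpha-2}+r\,\mathcal M(|\nabla u|\indic{\Omega})(x)\bigr).
\]

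\textbf{Step C: optimise and integrate.} Choose $r=\min\{r_0,(\|w\|_{\fsC^\alpha}/\mathcal M(|\nabla u|\indic{\Omega})(x))^{1/(3-\alpha)}\}$. This gives
\[
u\le C\Bigl(\|w\|_{\fsC^\alpha}^{\frac{1}{3-\alpha}}\,\mathcal M(|\nabla u|\indic{\Omega})^{\frac{2-\alpha}{3-\alpha}}+\|w\|_{\fsC^\alpha}\Bigr).
\]
Since $q\,\frac{2-\alpha}{3-\alpha}=2$ for $q=3+\eps_1$, the maximal theorem on $\fsL^2(\R^d)$ yields exactly the stated inequality. The additive term comes from the case $r=r_0$ and replaces your false claim $\int_\Omega u\lesssim\|w\|_{\fsC^\alpha}$. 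The constant depends on $\Omega$ through $r_0$ and $|\Omega|$. No Sobolev embedding is used, so $q<2^*$ is not what drives the estimate.
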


Using these lemmas, we can conclude the proof of \cref{thm:maind}. As in the proof of \cref{thm:main2}, we begin with a regularization procedure which allows us to work in a smooth framework with a finite number of equations, before passing to the limit. Existence, uniqueness, and absence of gelation follow from the estimates that we provide. We do not describe this procedure explicitly.

We begin with the
\begin{prop}\label{thm:coagulation-basecase}
  We consider the framework of \cref{thm:maind} and some $T > 0$. Then, there exist $\eps,\, \eps' > 0$ and a constant $C$ depending on the parameters of the problem, $\Omega$, $d$, $T$ as well as on $\|\rho_{\frac{3+{\eta}}{2}, \init}\|_{\fsL^{3}(\Omega)}$ and $\|\rho_{1, \init}\|_{\fsL^{\infty}(\Omega)}$ such that
  $$\|\rho_1\|_{\fsL^{3+\eps}(\Omega_T)}+ \|\rho_1\|_{\fsL^2([0, T], \fsL^{2^*+\eps'}(\Omega))} \leq C,$$
  where \mbox{$2^* := \frac{2d}{d - 2}$} (and any $q < +\infty$ if $d\leq 2$).
\end{prop}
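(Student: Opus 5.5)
Following the strategy of \cite{boutondesvillettes2025}, I will combine \cref{thm:main-neumann} and \cref{thm:interpolation} through an energy estimate on a moment of order larger than one.

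\textbf{Step 1: a Hölder bound via time integration.} The mass equation gives $\partial_t\rho_1=\Delta(\mu_1\rho_1)$, with $\mu_1\rho_1=\sum_i id_ic_i$. Set
\[
w(t,x):=\int_0^t \mu_1\rho_1(s,x)\,\dd s,
\]
which is non-negative and satisfies $\partial_t w=\mu_1\rho_1\ge 0$. Integrating the mass equation in time gives $\Delta w=\rho_1-\rho_{1,\init}$, so
\[
\frac{1}{\mu_1}\partial_t w-\Delta w=\rho_{1,\init},
\]
with Neumann boundary condition and $w(0)=0$. The coefficient $a=1/\mu_1$ lies in $[1/\overline D,1/\underline D]$, and the forcing $\rho_{1,\init}$ is in $\fsL^\infty$. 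So \cref{thm:main-neumann} applies with $p=q=\infty$ (then $\gamma=2$) and yields $\|w\|_{\fsC^\alpha(\overline{\Omega_T})}\le C$ for some $\alpha>0$. We may shrink $\alpha$ so that $\alpha<\frac12$. Moreover $\Delta w(t)=\rho_1(t)-\rho_{1,\init}\ge \rho_1(t)-\|\rho_{1,\init}\|_\infty$. To apply \cref{thm:interpolation} I will use $\tilde w:=w+\|\rho_{1,\init}\|_\infty|x|^2/(2d)$, or a similar fixed smooth correction. This gives $0\le\rho_1\le\Delta\tilde w$, and the Hölder norm of $\tilde w$ remains controlled, uniformly in $t$.

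\textbf{Step 2: an energy estimate producing $\nabla u$ in $\fsL^2$.} I need $\int_0^T\|\nabla u\|_{\fsL^2}^2<\infty$ for some $u\ge\rho_1$ with $u\le\Delta\tilde w$. The natural choice is $u=\rho_1$ itself, but its equation has the rough coefficient $\mu_1$ inside the Laplacian. I would therefore work instead with the $\fsL^2$ energy of a higher moment, using the strong decay \ref{ass:bounda'} and \ref{ass:boundB'}. Concretely, I test the equation of each $c_i$ against $i^{s}c_i$ with $s=1+\eta$ (equivalently, I compute $\frac{\dd}{\dd t}\sum_i i^{s}\int c_i^2$). This gives
\[
\frac{\dd}{\dd t}\sum_i i^s\!\int c_i^2+2\underline D\sum_i i^s\!\int|\nabla c_i|^2\le 2\sum_i i^s\!\int c_i(Q_i^++F_i^+).
\]
By \ref{ass:bounda'}, $a_{i-j,j}\lesssim (i-j)j\,i^{-2-\eta}$, so the coagulation gain term is bounded by a sum of the form $\sum c_i\,(ic_{\cdot}*jc_{\cdot})\,i^{s-2-\eta}$. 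With $s=1+\eta$ the weight is $i^{-1}$, and this sum is bounded by $\rho_1\sum_i c_i^2$-type quantities, and ultimately by $\rho_1^3$. The fragmentation gain is treated similarly using $B_i\lesssim i^{-1-\eta}$ and $\beta_{i+j,i}\le i+j$.

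The whole argument hinges on the fact that the right-hand side is cubic in $\rho_1$. By Cauchy–Schwarz, $\rho_1^2\le\bigl(\sum i^s c_i^2\bigr)\bigl(\sum i^{2-s}\bigr)$, which is summable only if $s>3$. So I expect to need the appropriate weights, $\sum_i i^{3+\eta}c_i^2$, which is why the constant involves $\rho_{\frac{3+\eta}{2},\init}$ in the statement. The outcome is that $\nabla$ of a square-root-type quantity dominating $\rho_1$ is controlled in $\fsL^2$, and the gradient of $\rho_1$ itself is controlled, up to lower-order terms, by $\int\rho_1^3$.

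\textbf{Step 3: closing, and the main obstacle.} \cref{thm:interpolation} gives
\[
\int\rho_1^{3+\eps_1}\lesssim \|\nabla u\|_{\fsL^2}^2+C.
\]
The cubic right-hand side is then absorbed by Young's inequality, because $3<3+\eps_1$: I write $\int\rho_1^3\le\delta\int\rho_1^{3+\eps_1}+C_\delta$. A Gronwall argument then bounds $\rho_1$ in $\fsL^{3+\eps}(\Omega_T)$ with $\eps=\eps_1$, and $\nabla u$ in $\fsL^2(\Omega_T)$. Sobolev embedding then gives the bound in $\fsL^2\fsL^{2^*}$. The extra $\eps'$ comes from interpolating this with the Hölder control of Step 1, or from the slack in the weights.

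The main obstacle is Step 2: designing a weighted energy functional whose dissipation dominates $\rho_1$ in the form demanded by \cref{thm:interpolation}, despite $d_i$ varying with $i$, and whose production terms are at most cubic in $\rho_1$. I would first try the weighted quadratic functional $\sum_i i^{3+\eta}c_i^2$ with exponents chosen as above, and check the convolution estimates using \ref{ass:bounda'}.
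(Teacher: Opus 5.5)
\textbf{First bound.} Your argument for the first bound is essentially the paper's. It uses the time-integrated flux $w=\int_0^t\sum_i d_i\,i\,c_i$ fed into \cref{thm:main-neumann}, the correction $\|\rho_{1,\init}\|_{\fsL^\infty}|x|^2/(2d)$, and the weighted energy $\sum_i i^{3+\eta}c_i^2$ (the paper's case $p=2$). Cauchy--Schwarz in $i$ then gives $\int_0^T\!\int_\Omega|\nabla\rho_1|^2\le C\bigl(1+\int_0^T\!\int_\Omega\rho_1^3\bigr)$, and you conclude with \cref{thm:interpolation} integrated in time plus Young. No Gronwall step is needed. The varying $d_i$ are not an obstacle either, since each equation is tested separately and $d_i\ge\underline D$.

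One step must be repaired. With the weight $i^{3+\eta}$, the bound $\beta_{i+j,i}\le i+j$ leaves a fragmentation term of order $\sum_i i^{3}c_i\sum_j c_{i+j}$. This is controlled only by $\rho_1\rho_2$, not by any function of $\rho_1$ (take $c_N=c_{N+1}=1/(2N+1)$). You need the sharper consequence of \ref{ass:massB}, namely $i\,\beta_{i+j,i}\le\sum_l l\,\beta_{i+j,l}=i+j$, which gives $F\le C_B\int_0^t\!\int_\Omega\rho_1^2$.

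\textbf{Second bound: the gap.} The genuine gap is $\|\rho_1\|_{\fsL^2([0,T],\fsL^{2^*+\eps'}(\Omega))}\le C$, which is nontrivial for $d=3,4$. Neither mechanism you suggest produces it.

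Interpolating with the H\"older control cannot push the spatial exponent above $2^*$. The bound $\rho_1\le\Delta\tilde w$ with $\tilde w\in\fsC^\alpha$ is one-sided regularity of order $\alpha-2$. On the scaling scale $-\frac{d}{q}$ of $\fsL^q$, $\fsH^1$ sits at $1-\frac d2=-\frac{d}{2^*}$. Since $\alpha-2<-1\le 1-\frac d2$ for $d\le 4$, interpolating between the two only reaches $\fsL^q$ with $q\le 2^*$. Interpolating $\fsL^2\fsL^{2^*}$ with $\fsL^{3+\eps}(\Omega_T)$ cannot exceed $2^*$ either, because $3+\eps<2^*$ here. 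Likewise, extra slack in the weight $i^s$ of a quadratic functional only improves summability in $i$: each $c_i$ is still merely in $\fsL^2([0,T],\fsH^1(\Omega))\subset\fsL^2\fsL^{2^*}$.

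The missing idea is a nonlinear energy. The paper tests the equations with $i^{3+\eta}c_i^{p-1}$ for some $p\in\,]2,\min(3,2+\frac{\eta}{2},2+\eps_1)[$.
\begin{enumerate}
\item The production terms are bounded exactly as in the quadratic case, by $C\bigl(1+\int_0^T\!\int_\Omega\rho_1^{p+1}\bigr)$. This is finite by the first bound, since $p+1<3+\eps_1$.
\item The dissipation controls $\sum_i|\nabla(i^{\frac{3+\eta}{2}}c_i^{p/2})|^2$. Sobolev applied to $c_i^{p/2}$, followed by Minkowski, puts $\|i^{\frac{3+\eta}{p}}c_i\|_{l^p}$ in $\fsL^p([0,T],\fsL^{2^*p/2}(\Omega))$.
\item H\"older in $i$ returns to $\rho_1\in\fsL^p([0,T],\fsL^{2^*p/2}(\Omega))$ with $2^*p/2>2^*$. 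This step is where the slack in the weight is really used, since it requires $p<2+\frac{\eta}{2}$.
\end{enumerate}
Alternatively, once $\rho_1\in\fsL^{3+\eps}(\Omega_T)$ is known, you can use the comparison $(\partial_t-d_i\Delta)(i^{2+\eta}c_i)\le C(\rho_1^2+\rho_1)$ together with \cref{thm:smoothing}. Bootstrapping as at the end of the proof of \cref{thm:maind} even yields $\rho_1\in\fsL^\infty(\Omega_T)$.
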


\begin{proof} 
  We first establish an energy estimate for solutions to \eqref{eq:coagulation}. Let $p \in [2, \min(3, 2 + \frac{\eta}{2})[$, we multiply \eqref{eq:coagulation} by $i^{3 + \eta}c_i^{p-1}$, we integrate over $\Omega_t$ and sum over $i$ to obtain
\begin{align*}
    &\frac1p\int_\Omega \sum_{i=1}^{+\infty} (i^{\frac{3 + \eta}{p}} c_i(t))^p - \frac1p\int_\Omega \sum_{i=1}^{+\infty} (i^{\frac{3 + \eta}{p}} c_i(0))^p
    + \underline{D}\frac{4(p - 1)}{p^2}\int_0^t\int_\Omega \sum_{i=1}^{+\infty} |\nabla i^{\frac{3 + \eta}{2}} c_i^\frac{p}{2}|^2\\
    &\leq \int_0^t\int_\Omega \sum_{i=1}^{+\infty}(i^{3 + \eta} c_i^{p- 1}) (Q_i^+ + F_i^+)\\
    &=\frac12 \int_0^t\int_\Omega \sum_{i=1}^{+\infty} (i c_i)^{p - 1}\sum_{j=1}^{i - 1}i^{4 + \eta - p} a_{i-j, j}c_{i-j}c_j + \int_0^t\int_\Omega \sum_{i=1}^{+\infty} (i c_i)^{p - 1}\sum_{j= 1}^{+\infty}i^{4 + \eta - p} B_{i+j}\beta_{i + j, i}c_{i+j}\\
    &=: K + F.
\end{align*}

We first consider the fragmentation part $F$. Using \ref{ass:massB}, we deduce that for any $i, \, j\in\N^*$, one has $\beta_{i + j, i} \leq\frac{i + j}{i}$. Using \ref{ass:boundB'}, we deduce that
\begin{align*}
  F
  &\leq C_B \int_0^t\int_\Omega \sum_{i=1}^{+\infty} (i c_i)^{p - 1}\sum_{j= 1}^{+\infty}\frac{i^{3 + \eta - p}(i + j)}{(i + j)^{1 + \eta}} c_{i+j}\\
  &\leq C_B \int_0^t\int_\Omega \sum_{i=1}^{+\infty} (i c_i)^{p - 1}\sum_{j= 1}^{+\infty}(i+j)^{3 - p} c_{i+j}.
\end{align*}

Since $p \geq 2$, one has $\sum_{j= 1}^{+\infty}(i+j)^{3 - p} c_{i+j} \leq \rho_1$, furthermore we can write $\sum_{i=1}^{+\infty} (i c_i)^{p - 1} \leq \left(\max_{j \in \N^*} jc_j\right)^{p - 2}\sum_{i=1}^{+\infty} i c_i \leq \rho_1^{p - 2}\rho_1 = \rho_1^{p - 1}$.

Hence, we finally obtain
\begin{align*}
  F \leq C_B \int_0^t\int_\Omega\rho_1^{p} &\leq C_\Omega \left(1 + \int_0^t\int_\Omega\rho_1^{p + 1}\right),
\end{align*}
where $C_\Omega$ is a constant that depends on $T$, $\Omega$, $p$ and $C_B$.

We now treat the coagulation part. Using \ref{ass:bounda'}, we obtain
\begin{equation*}
  i^{2 + \eta} a_{i-j, j} \leq C_a(i-j)j.
\end{equation*}
We recall the notation $C_k := (i^kc_i)_{i\in\N^*}$ extended by $0$ for $i\leq 0$ and we denote by $*$ the convolution between two sequences. We deduce the bound
\begin{align*}
  K
  &\leq \frac{C_a}{2} \int_0^t\int_\Omega \sum_{i=1}^{+\infty} (i c_i)^{p - 1}i^{2 - p}\sum_{j=1}^{i - 1} (i - j)c_{i-j}jc_j\\
  &\leq \frac{C_a}{2}\int_0^t\int_\Omega \sum_{i=1}^{+\infty} (ic_i)^{p - 1} C_1*C_1(i).
\end{align*}
To obtain the last inequality, we used $p\geq 2$ and hence $i^{2 - p} \leq 1$.
We can now obtain the bound
\begin{align*}
  K
  &\leq \frac{C_a}{2}\int_0^t\int_\Omega \|C_1\|_{l^{\infty}}^{p - 1} \|C_1*C_1\|_{l^1}\\
  &\leq \frac{C_a}{2}\int_0^t\int_\Omega \|C_1\|_{l^{\infty}}^{p - 1}\|C_1\|_{l^1}^2\\
  &\leq \frac{C_a}{2}\int_0^t\int_\Omega \|C_1\|_{l^1}^{p + 1}\\
  &= \frac{C_a}{2}\int_0^t\int_\Omega \rho_1^{p + 1}.
\end{align*}

Combining these estimates, we deduce that for $K_1:= C_\Omega + \frac{C_a}{2}$, one has
\begin{align}\label{eq:energy1}
    &\phantom{=}\frac1p\int_\Omega \sum_{i=1}^{+\infty} (i^{\frac{3 + \eta}{p}} c_i(t))^p
    + \underline{D}\frac{4(p - 1)}{p^2}\int_0^t\int_\Omega \sum_{i=1}^{+\infty} |\nabla i^{\frac{3 + \eta}{2}} c_i^\frac{p}{2}|^2\\
    &\leq K_1\left( 1 +  \int_0^t\int_\Omega \rho_1^{p + 1}\right)
    + \frac1p\int_\Omega \sum_{i=1}^{+\infty} (i^{\frac{3 + \eta}{p}} c_{i, \init})^p.\nonumber
  \end{align}

  Using the bounds $2\leq p\leq 3$, we note that
  \begin{align*}
    \int_\Omega \sum_{i=1}^{+\infty} (i^{\frac{3 + \eta}{p}} c_{i, \init})^p
    &= \|\|i^{\frac{3 + \eta}{p}} c_{i, \init}\|_{l^p}\|_{\fsL^p(\Omega)}^p\\
    &\leq \|\|i^{\frac{3 + \eta}{p}} c_{i, \init}\|_{l^1}\|_{\fsL^p(\Omega)}^p\\
    &\leq C'_\Omega \|\rho_{\frac{3 + {\eta}}{2}, \init}\|_{\fsL^3(\Omega)}^p,
  \end{align*}
  where $C'_\Omega$ is a constant that depends on $|\Omega|$ and $p$.\medskip

  Thus, one gets for $C_p := \max(\frac{p^2}{4(p - 1)\underline D}, p)\left(K_1 + \frac{C'_\Omega}{p} \|\rho_{\frac{3 + {\eta}}{2}, \init}\|_{\fsL^3(\Omega)}^p\right)$ the bound
  \begin{align}\label{eq:coagulation-energyp}
    &\phantom{=}\|\|i^{\frac{3 + \eta}{2}} c_i^{\frac{p}{2}}\|_{l^2}\|_{\fsL^\infty([0, T], \fsL^2(\Omega))}^2+
    \|\|\nabla(i^{\frac{3 + \eta}{2}} c_i^{\frac{p}{2}})\|_{l^2}\|_{\fsL^2(\Omega_T)}^2\nonumber\\
    &\leq C_p \left(\int_0^T\int_\Omega \rho_1^{p+1} + 1\right).
  \end{align}

  We first consider the case $p= 2$. We remark that
  \begin{align*}
    \int_0^T\int_\Omega |\nabla \rho_1|^2
    &\leq \int_0^T\int_\Omega \left(\sum_{i=1}^{+\infty} |\nabla ic_i|\right)^2\\
    &= \|\|\nabla ic_i\|_{l^1} \|_{\fsL^2(\Omega_T)}^{2}\\
    &\leq \|i^{-\frac{1 + \eta}{2}}\|_{l^2}^2\|\|\nabla i^{\frac{3+\eta}{2}} c_i\|_{l^2} \|_{\fsL^{2}(\Omega_T)}^{2}.
  \end{align*}
  Since $\frac{1 + \eta}{2} > \frac{1}{2}$ we have $i^{-\frac{1 + \eta}{2}}\in l^2$. Together with \eqref{eq:coagulation-energyp}, we obtain for some constant $C > 0$,
  \begin{equation}\label{eq:coagulation:energy}
    \int_0^T\int_\Omega |\nabla \rho_1|^2 \leq C\left(1 +\int_0^T\int_\Omega \rho_1^3\right).
  \end{equation}

  We now consider the case $p> 2$, using a Sobolev embedding we have for \mbox{$2^* := \frac{2d}{d - 2}$} (and any $q < +\infty$ if $d\leq 2$) and a constant $C_\Omega$,
  \begin{align*}
    \|\|i^{\frac{3 + \eta}{2}} c_i^{\frac{p}{2}}\|_{\fsL^2([0, T], \fsL^{2^*}(\Omega))}\|_{l^2}^2\leq
    C_\Omega\|\|i^{\frac{3 + \eta}{2}} c_i^{\frac{p}{2}}\|_{\fsL^2([0, T], \fsH^1(\Omega))}\|_{l^2}^2.
  \end{align*}

  We now use Minkowski's inequality to obtain
  \begin{align*}
    \|\|i^{\frac{3 + \eta}{p}} c_i\|_{l^p}\|_{\fsL^p([0, T], \fsL^{\frac{2^*p}{2}}(\Omega))}^p
    &= \|\|\left(i^{\frac{3 + \eta}{p}} c_i\right)^{\frac{p}{2}}\|_{l^2}\|_{\fsL^2([0, T], \fsL^{2^*}(\Omega))}^2\\
    &
    \leq
    \|\|i^{\frac{3 + \eta}{2}} c_i^{\frac{p}{2}}\|_{\fsL^2([0, T], \fsL^{2^*}(\Omega))}\|_{l^2}^2.
  \end{align*}

  We can use Hölder's inequality to deduce that
    \begin{align*}
      \|\rho_1\|_{\fsL^p([0, T], \fsL^{\frac{2^*p}{2}}(\Omega))}^p
      &= \|\|i c_i\|_{l^1}\|_{\fsL^p([0, T], \fsL^{\frac{2^*p}{2}}(\Omega))}^p\\
    &\leq
    \|\|i^{1-\frac{3 + \eta}{p}}\|_{l^{p'}}\|i^{\frac{3 + \eta}{p}} c_i\|_{l^p}\|_{\fsL^p([0, T], \fsL^{\frac{2^*p}{2}}(\Omega))}^p.
  \end{align*}
  We note that $\|i^{1-\frac{3 + \eta}{p}}\|_{l^{p'}}$ is finite provided that $\left(\frac{3 + \eta}{p} - 1\right)p' > 1$. This condition amounts to
  \begin{equation}\label{eq:cdtp}
    p < 2 + \frac{\eta}{2}.
  \end{equation}
  Collecting the preceding estimates, we have thus shown that under the condition \eqref{eq:cdtp}, one has for some constant $C > 0$,
  \begin{equation}\label{eq:coagulation:energyp}
    \|\rho_1\|_{\fsL^p([0, T], \fsL^{\frac{2^*p}{2}}(\Omega))}^p\leq C\left(1 +\int_0^T\int_\Omega \rho_1^{p+1}\right).
  \end{equation}
  This estimate provides slightly more integrability on the left-hand side than \eqref{eq:coagulation:energy}.

    We have thus shown two energy estimates. We point out that to obtain these estimates, we use the strong assumptions \ref{ass:bounda'} and \ref{ass:boundB'} to convert some $l^2$ (or $l^p$) norms to $l^1$ ones. We now prove an a priori bound using \cref{thm:main-neumann}. \medskip

    Define $w(t, \cdot) := \int_0^t \left(\sum_{i=1}^{+\infty} d_ii c_{i}\right)(s, \cdot)\dd s$. We show that $w$ satisfies a suitable parabolic equation.

  We note that
  \begin{equation*}
    \partial_t w = \sum_{i=1}^{+\infty} d_ii c_{i},
  \end{equation*}
  whereas by \eqref{eq:coagulation}
  \begin{equation*}
    \Delta w = \int_0^t \left(\sum_{i=1}^{+\infty} d_ii \Delta c_{i} \right)= \int_0^t \sum_{i=1}^{+\infty}\left( i \partial_t c_{i} - iQ_i - iF_i\right).
  \end{equation*}  

  Using \eqref{eq:weak} and \eqref{eq:weakF} (and \ref{ass:massB}), we see that $\sum_{i=1}^{+\infty} iQ_i = \sum_{i=1}^{+\infty} iF_i = 0$. Hence, we obtain that
    \begin{equation*}
    \Delta w =  \sum_{i=1}^{+\infty} i \left(c_{i}(t) - c_{i, \init}\right) = \rho_1(t) - \rho_{1, \init}.
  \end{equation*}  

  Thus, we have shown that

  \begin{align*}
    \partial_t w
    &= \sum_{i=1}^{+\infty} d_i ic_{i}
    = \mu \sum_{i=1}^{+\infty} ic_i
    = \mu \left(\Delta w + \rho_{1, \init}\right),
  \end{align*}
  with $\mu := \frac{\sum_{i=1}^{+\infty} d_i i c_i}{\sum_{i=1}^{+\infty} i c_i}$. We note that $\underline D\leq \mu \leq \overline{D}$ thanks to \ref{ass:boundd}. Furthermore, $w$ satisfies homogeneous Neumann boundary conditions $\nabla w \cdot n = 0$ on $[0, T]\times\partial\Omega$.\medskip

  We note that $w_\init = 0$, $w\geq 0$, $\partial_t w \geq 0$ and $\mu\rho_{1, \init}$ is bounded in $\fsL^\infty(\Omega_T)$. We can thus apply \cref{thm:main-neumann} to deduce that for some $\alpha > 0$ and for some $C_\alpha > 0$ with the desired dependence we have 
  \begin{equation}\label{eq:holder1}
    \|w\|_{\fsC^\alpha(\overline\Omega_T)} \leq C_\alpha.
  \end{equation}

  This a priori estimate is sufficient to complete the proof of \cref{thm:coagulation-basecase} thanks to \cref{thm:interpolation}. \medskip
  
  We recall that $\rho_1 = \Delta w + \rho_{1, \init} \leq \Delta w'$, with $w' = w + \frac{|x|^2}{2d} \|\rho_{1, \init}\|_{\fsL^\infty(\Omega)}$.
  Since $w'$ enjoys the same regularity as $w$ we can apply \cref{thm:interpolation} with $\alpha$ found in \eqref{eq:holder1}. Hence, with the notations of this Lemma one has for $0 < t < T$
  \begin{equation*}
    \|\rho_1(t) \|_{\fsL^{3+\eps_1}(\Omega)}
    \le  C_{d, \alpha}\, \| {w'}(t) \|_{\fsC^{\alpha}(\overline{\Omega}) }^{\frac{1}{3- \alpha}}\,
    \| \nabla \rho_1(t)\|_{\fsL^2(\Omega)}^{\frac{2}{3+\eps_1}}
    +  C_{d,\alpha}\, \| {w'}(t) \|_{\fsC^{\alpha}(\overline{\Omega})}.
  \end{equation*}

  Integrating this estimate with respect to time, we deduce that for some constant $C$ with the same dependence as in the statement of \cref{thm:coagulation-basecase}
  \begin{equation*}
    \|\rho_1 \|_{\fsL^{3+\eps_1}(\Omega_T)}^{3+\eps_1}
    \le  C
    \| \nabla \rho_1\|_{\fsL^2(\Omega_T)}^{2}
    +  C.
  \end{equation*}

  Using the energy estimate \eqref{eq:coagulation:energy} (and allowing $C$ to change from line to line), we obtain
  \begin{equation*}
    \|\rho_1 \|_{\fsL^{3+\eps_1}(\Omega_T)}^{3+\eps_1}
    \le  C
    \| \rho_1\|_{\fsL^3(\Omega_T)}^{3}
    +  C.
  \end{equation*}

  Young's inequality then gives that $\|\rho_1 \|_{\fsL^{3+\eps_1}(\Omega_T)} \leq C$ for some $\eps_1 > 0$ (and hence, we deduce the first inequality of \cref{thm:coagulation-basecase} by taking \mbox{$\eps := \eps_1$}). For the second inequality, we choose $p \in ]2, \min(3, 2 + \frac{\eta}{2}, 2 + \eps_1)[$ and we apply \eqref{eq:coagulation:energyp} with $2^*\frac{p}{2} = 2^* + \eps'$.
  \end{proof}

  We can now improve this estimate to higher moments.
\begin{prop}\label{thm:coagulation-recursion}
    We consider the framework of \cref{thm:maind} and some $T > 0$. Then, for $k\in\N^*$ there exist $\eps_k, \, \eps'_k > 0$ and a constant $C$ depending on the parameters of the problem, $k$, $\Omega$, $d$, $T$ as well as on $\|\rho_{l + \frac12 + \frac\eta2, \init}\|_{\fsL^{3}(\Omega)}$ and $\|\rho_{l, \init}\|_{\fsL^{\infty}(\Omega)}$ for $l\leq k$, such that
    $$\|\rho_k\|_{\fsL^{3+\eps_k}(\Omega_T)} + \|\rho_k\|_{\fsL^2([0, T], \fsL^{2^*+\eps'_k}(\Omega))} \leq C_k,$$
    where $2^*$ is defined in \cref{thm:coagulation-basecase}.
\end{prop}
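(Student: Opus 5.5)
We argue by induction on $k$; the base case $k = 1$ is \cref{thm:coagulation-basecase}. Fix $k\geq 2$ and assume the bound holds for all $l \leq k - 1$. The plan is to redo the energy estimate of \cref{thm:coagulation-basecase} with the weight $i^{3+\eta}$ replaced by $i^{2k+1+\eta}$. That is, we multiply \eqref{eq:coagulation} by $i^{2k+1+\eta}c_i^{p-1}$ for $p \in [2, \min(3, 2+\frac\eta2)[$, integrate over $\Omega_t$, and sum in $i$. The left-hand side controls $\|\|i^{\frac{2k+1+\eta}{2}}c_i^{\frac p2}\|_{l^2}\|_{\fsL^\infty \fsL^2}^2$ together with the corresponding gradient term. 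The initial term is bounded by $\|\rho_{k+\frac12+\frac\eta2,\init}\|_{\fsL^3(\Omega)}^p$, exactly as before.

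\textbf{Fragmentation term.} Using $\beta_{i+j,i}\leq \frac{i+j}{i}$ and \ref{ass:boundB'}, the fragmentation contribution is bounded by $\sum_i (i^k c_i)^{p-1}\sum_j (i+j)^{k(2-p)+2-p}\,\ldots$. Bounding it crudely by $\rho_k^{p-1}\rho_k \leq \rho_k^{p}$, it becomes $\leq C(1+\int\rho_k^{p+1})$, the same structure as in the base case.

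\textbf{Coagulation term.} Using \ref{ass:bounda'}, we have $i^{2k+\eta}a_{i-j,j}\leq C_a\, i^{2k-2}(i-j)j$. We write $i^{2k-2}\leq C_k\big((i-j)^{2k-2}+j^{2k-2}\big)$ and split $i^{k-1}\cdot i^{k-1}$ more carefully. After this splitting the term is bounded by
\[
\sum_i (i^kc_i)^{p-1}\, i^{2-p}\,(C_k*C_1)(i)\,i^{k-1}\ldots
\]
The target is an estimate of the form $\int_0^t\int_\Omega \rho_k^{p}\rho_1$: the $l^\infty$ norm of $(i^kc_i)$ is bounded by $\rho_k$, and $\|C_k*C_1\|_{l^1}\leq\rho_k\rho_1$. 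The weight bookkeeping is delicate. The natural total weight is $i^{k(p-1)}\cdot i^{k+1}$, which exceeds $i^{2k+1+\eta}$ by an amount that must be absorbed using $p\geq 2$, that is $i^{2-p}\leq 1$. If the exponents do not close exactly, I would instead test with $i^{(k-1)p+3+\eta}c_i^{p-1}$, so that the left-hand side controls $\|i^{k-1+\frac{3+\eta}{p}}c_i\|_{l^p}$. Hölder in $i$, as in \eqref{eq:cdtp}, then converts this into a bound on $\rho_k$ under the same restriction $p<2+\frac\eta2$. The result is
\[
\|\rho_k\|_{\fsL^p([0,T],\fsL^{2^*p/2})}^p + \|\nabla\rho_k\|_{\fsL^2(\Omega_T)}^2 \leq C\Big(1+\int_0^T\!\!\int_\Omega \rho_k^{p}\rho_1\Big).
\]
The main obstacle is that the right-hand side is now linear in the unknown high norm of $\rho_k$, times $\rho_1$. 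We close it by Hölder: $\int\rho_k^p\rho_1 \leq \|\rho_1\|_{\fsL^{3+\eps}}\|\rho_k\|_{\fsL^{p(3+\eps)/(2+\eps)}}^p$. Since $p(3+\eps)/(2+\eps)$ is slightly smaller than what the left-hand side controls, parabolic interpolation between $\fsL^\infty\fsL^{p}$ and $\fsL^p\fsL^{2^*p/2}$ gives a strictly larger exponent, and a Young/absorption argument then closes the estimate. If the gap is too thin for some $p$, we first take $p=2$ and then bootstrap.

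\textbf{Upgrade to $\fsL^{3+\eps_k}$.} The $\fsL^2\fsL^{2^*+\eps'_k}$ bound follows from the display above with $p>2$, exactly as in the base case. To obtain $\fsL^{3+\eps_k}$, we interpolate between $\fsL^\infty\fsL^2$ and $\fsL^2\fsL^{2^*+\eps'}$. For $d\leq4$ we have $2^*\geq4$, and the parabolic interpolation exponent $2+\frac{4}{d}\geq 3$, with the extra $\eps'$ giving a strictly larger exponent $3+\eps_k$. An alternative route is to mimic the base case via \cref{thm:interpolation}, but $\rho_k$ lacks the $\Delta w$ structure. So I would rely on the interpolation above and use the induction hypothesis only to control lower-order terms arising from $i^{2k-2}$ splittings, such as $\rho_l\rho_{k-l+1}$.
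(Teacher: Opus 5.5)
\textbf{Where the argument breaks.} Your route is different from the paper's, and its key observation is sound. The step meant to close the estimate, however, does not work as written. The inequality you want to close,
\[
\|\rho_k\|^p_{\fsL^p([0,T],\fsL^{2^*p/2}(\Omega))}\le C\Big(1+\int_0^T\!\!\int_\Omega\rho_k^p\rho_1\Big),
\]
has the same homogeneity (degree $p$ in $\rho_k$) on both sides. The coefficient $\|\rho_1\|_{\fsL^{3+\eps}(\Omega_T)}$ supplied by \cref{thm:coagulation-basecase} is not small. Hölder plus the fact that the left side controls a strictly higher Lebesgue exponent therefore gives only $A\le C+C\|\rho_1\|_{\fsL^{3+\eps}}A$. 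Young's inequality can absorb only if you interpolate against a lower norm of $\rho_k$ that is already bounded, and you have none. Globally in time no absorption is possible; this is a Gronwall situation, and that is the missing ingredient.

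\textbf{How to fix it.} Set $Y:=(\sum_i i^{2k+1+\eta}c_i^p)^{1/2}$. Then $|\nabla Y|^2\le\sum_i|\nabla(i^{\frac{2k+1+\eta}{2}}c_i^{p/2})|^2$, and $\rho_k^p\le CY^2$ by Hölder in $i$. For this weight the Hölder step needs $p<2+\frac{\eta}{k+1}$, not $p<2+\frac\eta2$. With $q=3+\eps$, Gagliardo--Nirenberg with $\theta=\frac{d}{2q}<1$ gives, pointwise in time,
\[
\int_\Omega Y^2\rho_1\le\delta\|Y\|^2_{\fsH^1(\Omega)}+C_\delta\|\rho_1(t)\|_{\fsL^q(\Omega)}^{\frac{2q}{2q-d}}\|Y\|^2_{\fsL^2(\Omega)} .
\]
Moreover $\frac{2q}{2q-d}\le q$ exactly when $q\ge\frac{d+2}{2}$, which is where $d\le4$ enters. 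Gronwall then bounds $Y$ in $\fsL^\infty\fsL^2\cap\fsL^2\fsH^1$. Hence $\rho_k^{p/2}\lesssim Y\in\fsL^{2+4/d}(\Omega_T)$ and $\rho_k\in\fsL^p\fsL^{2^*p/2}$, which yields both claimed bounds for any admissible $p>2$.

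This also requires keeping the fragmentation term as $C_B\int\!\!\int\rho_k^p\le C\int\!\!\int Y^2$. Rewriting it as $C(1+\int\!\!\int\rho_k^{p+1})$, as you propose, reintroduces exactly the superlinear term your method cannot handle.

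\textbf{The coagulation bookkeeping.} Your target bound is correct once stated cleanly. For $p\ge2$,
\[
i^{2k+1+\eta}c_i^{p-1}a_{i-j,j}\le C_a(i^kc_i)^{p-1}i^{k-1}(i-j)j,
\]
and $i^{k-1}\le 2^{k-1}\max(i-j,j)^{k-1}$ gives a bound $C\rho_k^p\rho_1$. No lower-order terms appear and the induction hypothesis is not used. Splitting all of $i^{2k-2}$ onto the convolution would instead produce $\rho_{2k-1}$.

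\textbf{Comparison with the paper.} The paper uses $a_{i,j}\le 2^{k-1}C_a\frac{i^kj^k}{(i+j)^{k+1+\eta}}$, obtains the superlinear term $\int\!\!\int\rho_k^{p+1}$, and closes it exactly as in the base case. Contrary to your remark, $\rho_k$ does have the $\Delta w$ structure, and the paper builds it as follows:
\begin{enumerate}
\item it adds $m\ge0$ solving a heat equation with forcing $-\sum_i i^kF_i\ge0$ and sets $w=\int_0^t\mu_k(\rho_k+m)$;
\item it bounds $\int_0^t\sum_i i^kQ_i\lesssim\int_0^t\sum_l\rho_l\rho_{k-l}$ in $\fsL^\infty\fsL^{\frac{2^*}{2}+}$ using the induction hypothesis;
\item it gets $w$ Hölder from \cref{thm:main-neumann};
\item it corrects by an elliptic $\phi$ so that $0\le\rho_k\le\Delta w'$, and then applies \cref{thm:interpolation}.
\end{enumerate}
Once repaired, your argument is lighter. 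For $k\ge2$ it needs only $\rho_1\in\fsL^{3+\eps}(\Omega_T)$ and a linear Gronwall argument, bypassing \cref{thm:main-neumann}, \cref{thm:interpolation} and the auxiliary $m,\phi$. The paper's route does not exploit the finer product structure $\rho_k^p\rho_1$ and simply reruns the base-case machinery for every $k$.
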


\begin{proof}
  We prove the claim by induction on $k$, the base case $k=1$ being provided by Proposition~\ref{thm:coagulation-basecase}. Let $k>1$ be an integer such that the claim holds for $k - 1$.
  
  Let $m$ be the solution to the following parabolic equation
  \begin{equation}\label{eq:m}
    \left\{
      \begin{aligned}
        &\partial_t m - \overline D \Delta m = -\sum_{i=1}^{+\infty} i^k F_i(c) \quad
        &&\text{in }\Omega_T,\\
        &\nabla m\cdot n = 0\quad
        &&\text{on }[0, T]\times\partial\Omega,\\
        &m(0) = 0\quad
        &&\text{in }\Omega.
      \end{aligned}
    \right. 
  \end{equation}
  Using the weak formulation \eqref{eq:weakF}, we obtain the identity
  \begin{align*}
    \sum_{i=1}^{+\infty} i^k\,F_i(c) = \sum_{i=2}^{+\infty}B_ic_i\left(\sum_{j = 1}^{i - 1}\beta_{i, j}j^k - i^k\right).
  \end{align*}
  Using \ref{ass:massB}, we note that
  $\sum_{j = 1}^{i - 1}\beta_{i, j}j^k - i^k\leq i^{k - 1}\left(\sum_{j = 1}^{i - 1}\beta_{i, j}j - i\right) = 0$. Thus, $\sum_{i=1}^{+\infty} i^k\,F_i(c) \leq 0$. In particular by the parabolic minimum principle, one has $m \geq 0$. The quantity $m$ has been introduced in such a way that we will be able to work mostly with non-negative functions.\medskip

  We can sum \eqref{eq:coagulation} over $i$ and add \eqref{eq:m} to obtain
  \begin{equation*}
    \partial_t \left(\rho_{k} + m \right) - \Delta\left(\mu_k (\rho_{k} + m)\right) = \sum_{i=1}^{+\infty} i^kQ_{i},
  \end{equation*}
  with $\mu_k = \frac{\sum_{i=1}^{+\infty} d_i i^k c_i + \overline D m}{\sum_{i=1}^{+\infty} i^k c_i + m}$. We notice again that $\underline{D} \leq \mu_k \leq \overline{D}$.

  Define $w(t, \cdot) := \int_0^t \left(\mu_k[\rho_k + m]\right)(s, \cdot)\dd s$. Proceeding as in \cref{thm:coagulation-basecase}, we obtain
  \begin{equation*}
    \partial_t w = \mu_k(\rho_k + m),
  \end{equation*}
  and
  \begin{align*}
    \Delta w
    &= \int_0^t \Delta\left(\mu_k[\rho_k + m]\right)\\
    &= \int_0^t \left(\partial_t (\rho_k + m) - \sum_{i=1}^{+\infty} i^kQ_i\right)\\
    &= \rho_k(t) + m(t) - \rho_{k, \init} - \int_0^t \sum_{i=1}^{+\infty} i^kQ_i.
  \end{align*}

  It follows that
  \begin{equation}\label{eq:eqwk}
    \partial_t w = \mu_k\Delta w + \mu_k\left(\rho_{k, \init} + \int_0^t \sum_{i=1}^{+\infty} i^kQ_i\right).
  \end{equation}
   
  We now bound the forcing terms. The weak formulation \eqref{eq:weak} gives
  \begin{align*}
    2\sum_{i=1}^{+\infty} i^kQ_i
    &= \sum_{i=1}^{+\infty}\sum_{j  = 1}^{+\infty} a_{i, j}c_ic_j\left((i+j)^k - i^k - j^k\right)\\
    &= \sum_{i=1}^{+\infty}\sum_{j  = 1}^{+\infty} a_{i, j}c_ic_j\left(\sum_{l = 1}^{k - 1} \binom{k}{l}i^lj^{k-l}\right).
  \end{align*}

  We note that \ref{ass:bounda'} implies
  \begin{equation*}
    a_{i, j} \leq C_a \frac{\sqrt{ij}^2}{(i+j)^{2+{\eta}}}\leq\frac{C_a}{2} \frac{(i + j)^2}{(i+j)^{2+{\eta}}} \leq \frac{C_a}{2}.
  \end{equation*}

  Hence, for some constant $C > 0$ with the same dependence as in the statement and that may change from line to line
  \begin{align}\label{eq:boundQ}
   0\leq \sum_{i=1}^{+\infty} i^kQ_i
    & \leq C\sum_{l = 1}^{k - 1}\binom{k}{l}\sum_{i=1}^{+\infty}\sum_{j  = 1}^{+\infty} (i^lc_i)(j^{k-l}c_j)\nonumber\\
    & \leq C \sum_{l = 1}^{k - 1} \rho_l\rho_{k-l}.
  \end{align}

  We now introduce the notation $p+$ for some $p\in(1, +\infty)$. This denotes a constant $p + \kappa$, where $\kappa > 0$ depends on $d$ and on $\eps_l, \eps'_l$ for $l < k$ (we recall that the $\eps_l$, $\eps'_l$ are given by the induction procedure).
    
  By the induction hypothesis, for $0 < l < k$, $\rho_l$ is bounded in $\fsL^2([0, T],\fsL^{2^*+}(\Omega_T))$. Hence, $\rho_l\rho_{k - l}$ is bounded in $\fsL^1([0, T], \fsL^{\frac{2^*+}{2}}(\Omega))$. It follows that the quantity
  $$\mu_k\left(\rho_{k, \init} + \int_0^t \sum_{i=1}^{+\infty} i^kQ_i\right)$$
  is bounded in $\fsL^\infty([0, T], \fsL^{\frac{2^*+}{2}}(\Omega))$. Noticing that
  $2 - \frac{2}{+\infty} - \frac{2d}{2^*+} = (4 - d)+ > 0$, this is an admissible forcing for \cref{thm:main-neumann}.  We recall that $w$ satisfies \eqref{eq:eqwk} together with homogeneous Neumann boundary conditions. Furthermore, $\partial_t w \geq 0$, $w_\init = 0$ and $w \geq 0$, thus, we deduce from \cref{thm:main-neumann} that for some $\alpha, \, C_\alpha > 0$, we have $\|w\|_{\fsC^{\alpha}(\overline\Omega_T)} \leq C_\alpha$.
  
  Furthermore $\Delta w = \rho_k + m - \rho_{k, \init} - \int_0^t\sum_{i = 1}^{+\infty}i^kQ_i$. Let $\phi$ be the solution to
  \begin{equation*}\left\{
    \begin{aligned}
      &\Delta \phi(t, \cdot) = \rho_{k, \init} + \int_0^t\sum_{i = 1}^{+\infty}i^kQ_i
      &&\quad\text{in }\Omega\times[0, T],\\
      &\phi(t, \cdot) = 0
      &&\quad\text{on }\partial\Omega\times[0, T].
    \end{aligned}\right.
  \end{equation*}
  Since the right-hand side belongs to $\fsL^\infty([0, T], \fsL^{\frac{2^*}{2}+}(\Omega))$, in particular, it belongs in dimension $d\leq 4$ to $\fsL^\infty([0, T], \fsL^{\frac{d}{2}+}(\Omega))$. Classical elliptic regularity then shows that $\phi$ is bounded in $\fsL^\infty([0, T], \fsC^{\beta}(\overline\Omega))$ for some $\beta >0$.
  
  Recalling that $m\geq 0$, and setting $w' := w + \phi$, we have thus shown that there exist \mbox{$\alpha, \, C_\alpha > 0$} such that
  \begin{equation}\label{eq:apriori}
    0 \leq \rho_k \leq \Delta w'\quad\text{and }\| w'\|_{\fsL^\infty([0, T], \fsC^\alpha(\overline\Omega))} \leq C_\alpha.
  \end{equation}
  We now derive two energy inequalities in the spirit of \cref{thm:coagulation-basecase}.\medskip

  Let $p\in [2, \min(3, k + 1 + \frac{\eta}{2})[$. We multiply \eqref{eq:coagulation} by $i^{2k + 1 + \eta}c_i^{p - 1}$, integrate over $\Omega_t$ and sum over $i$ to obtain for $0 < t < T$
  \begin{align}\label{eq:boundNRJ}
    &\frac1p \int_\Omega \sum_{i=1}^{+\infty} i^{2k + 1 + \eta} c_i^p(t) - \frac1p\int_\Omega \sum_{i=1}^{+\infty} i^{2k + 1 +\eta}c_{i, \init}^{p} + \frac{4(p - 1)}{p^2}\underline D\int_0^t\int_\Omega \sum_{i=1}^{+\infty} |\nabla i^{k + \frac 12 + \frac{{\eta}}{2}}  c_i^{\frac{p}{2}}|^2\nonumber\\
    &\leq \int_0^t\int_\Omega \sum_{i=1}^{+\infty}i^{2k +1 + \eta}  c_i^{p - 1}  (Q_i^+ + F_i^+).
  \end{align}

  We first bound the contribution of the coagulation term. We note that \ref{ass:bounda'} implies
  \begin{align}\label{eq:aij}
    a_{i, j}\nonumber
    &\leq C_a \frac{ij}{(i+j)^{2 +{\eta}}}\nonumber\\
    &\leq C_a \frac{i^kj^k}{(i+j)^{k + 1 + {\eta}}}\frac{(i+j)^{k - 1}}{(ij)^{k-1}}\nonumber\\
    &\leq 2^{k - 1} C_a \frac{i^kj^k}{(i+j)^{k + 1 + {\eta}}},
  \end{align}
  where we used to obtain the last line that for $i, j \in\N^*$ we have $i + j \leq 2ij$.
  
  We obtain the bound
\begin{align*}
    K
    &:= \int_0^t\int_\Omega \sum_{i=1}^{+\infty}i^{2k +1 + \eta}  c_i^{p - 1}Q_i^+\\
    &=\frac12\int_0^t\int_\Omega \sum_{i=1}^{+\infty} (i^k c_i)^{p - 1}\sum_{j=1}^{i - 1}i^{k + 1 + {\eta} + k (2- p)}  a_{i-j, j}c_{i-j}c_j\\
    &\leq  2^{k - 2} C_a \int_0^t\int_\Omega \sum_{i=1}^{+\infty} (i^k c_i)^{p - 1}\sum_{j=1}^{i - 1}[(i -j)^k c_{i-j}][j^kc_j].
\end{align*}
The last inequality follows from \eqref{eq:aij} and the bound $i^{k(2 - p)} \leq 1$ (since $p \geq 2$). We deduce
\begin{align}\label{eq:boundK}
    K  
    &\leq  2^{k - 2} C_a\int_0^t\int_\Omega \sum_{i=1}^{+\infty} (i^kc_i)^{p - 1} C_k*C_k(i)\nonumber\\
    &\leq  2^{k - 2} C_a\int_0^t\int_\Omega \|C_k\|^{p - 1}_{l^{\infty}} \|C_k*C_k\|_{l^1}\nonumber\\
    &\leq  2^{k - 2} C_a\int_0^t\int_\Omega \|C_k\|^{p - 1}_{l^{\infty}}\|C_k\|_{l^1}^2\nonumber\\
    &\leq  2^{k - 2} C_a\int_0^t\int_\Omega \rho_k^{p+1}.
  \end{align}

  We now bound the contribution of the fragmentation term, we use both the bound $\beta_{i + j, i} \leq \frac{i + j}{i}$ and \ref{ass:boundB'}
    \begin{align}
    F
    &:= \int_0^t\int_\Omega \sum_{i=1}^{+\infty}i^{2k +1 + \eta}  c_i^{p - 1} F_i^+\nonumber\\
    &= \int_0^t\int_\Omega \sum_{i=1}^{+\infty}i^{2k +1 + \eta}  c_i^{p - 1}\sum_{j = 1}^{+\infty}B_{i + j}\beta_{i + j, i}c_{i+j}\nonumber\\
    &\leq C_B \int_0^t\int_\Omega \sum_{i=1}^{+\infty} i^{k(2 - p)}(i^k c_i)^{p - 1}\sum_{j=1}^{+\infty}i^{k}\frac{i^{1 + \eta}}{i(i+j)^\eta}c_{i+j}\nonumber\\
    &\leq C_B\int_0^t\int_\Omega \sum_{i=1}^{+\infty} (i^kc_i)^{p - 1} \sum_{j=1}^{+\infty}(i+j)^{k}c_{i + j}\nonumber.
  \end{align}
  
  We notice that $\sum_{j=1}^{+\infty}(i+j)^{k}c_{i + j} \leq \rho_k$ and
  \begin{align*}
    \sum_{i=1}^{+\infty} (i^kc_i)^{p - 1}
    &\leq \left(\max_{j\in\N^*}i^kc_i\right)^{p - 2} \sum_{i=1}^{+\infty} i^kc_i\\
    &\leq \rho_k^{p - 1}.
  \end{align*}

  Hence, we deduce that
  \begin{align}\label{eq:boundF}
    F
    &\leq C_B\int_0^t\int_\Omega \rho_k^p\nonumber\\
    &\leq C\left(1 + \int_0^t\int_\Omega \rho_k^{p +1}\right),
  \end{align}
  where $C$ is a constant depending on $C_B$, $\Omega$, $T$.

  Using the bounds $2\leq p\leq 3$, we note that
  \begin{align}\label{eq:bound-init}
    \int_\Omega \sum_{i=1}^{+\infty} i^{2k + 1 + \eta} c_{i, \init}^p
    &= \|\|i^{\frac{2k + 1 + \eta}{p}} c_{i, \init}\|_{l^p}\|_{\fsL^p(\Omega)}^p\nonumber \\
    &\leq \|\|i^{\frac{2k + 1 + \eta}{p}} c_{i, \init}\|_{l^1}\|_{\fsL^p(\Omega)}^p\nonumber\\
    &\leq C'_\Omega \|\rho_{k + \frac{1 + {\eta}}{2}, \init}\|_{\fsL^3(\Omega)}^p,
  \end{align}
  where $C'_\Omega$ is a constant that depends on $|\Omega|$ and $p$.\medskip

  Using \eqref{eq:boundNRJ}, \eqref{eq:boundK}, \eqref{eq:boundF} and \eqref{eq:bound-init}, we deduce that for some $C_p$ with the same dependence as in statement (and also depending on $p$), one has
  \begin{align}\label{eq:NRJintk}
    \sup_{0\leq t \leq T}\int_\Omega \sum_{i=1}^{+\infty} |i^{k + \frac 12 + \frac{{\eta}}{2}}  c_i^{\frac{p}{2}}|^2 + \int_0^T\int_\Omega \sum_{i=1}^{+\infty} |\nabla i^{k + \frac 12 + \frac{{\eta}}{2}}  c_i^{\frac{p}{2}}|^2
    \leq C_p\left(\int_0^T\int_\Omega \rho_k^{p+1} + 1\right).
  \end{align}

  We first consider the case $p=2$, we can compute
  \begin{align}\label{eq:boundconv}
    \int_0^T\int_\Omega |\nabla \rho_k|^2
    &\leq \|\|\nabla i^kc_i\|_{l^1} \|_{\fsL^{2}(\Omega_T)}^2\nonumber\\
    &\leq \| i^{-\frac{1+\eta}{2}}\|_{l^2} \|\|\nabla i^{k + \frac{1 + {\eta}}{2}} c_i\|_{l^2} \|_{\fsL^{2}(\Omega_T)}^{2},
  \end{align}
  where we used the discrete Cauchy--Schwarz inequality to obtain the last line.

  Using \eqref{eq:NRJintk}, we obtain for a constant $C_2 > 0$
  \begin{align}\label{eq:coagulation:energy2k}
    \int_0^T\int_\Omega |\nabla \rho_k|^2
    \leq C_2\left(\int_0^T\int_\Omega \rho_k^{3} + 1\right).
  \end{align}

  We now consider the case $p> 2$. Using a Sobolev embedding we have for \mbox{$2^* := \frac{2d}{d - 2}$} (and any $q < +\infty$ if $d\leq 2$) and a constant $C_\Omega$ depending only on $\Omega$,
  \begin{align*}
    \|\|i^{\frac{2k + 1 + \eta}{2}} c_i^{\frac{p}{2}}\|_{\fsL^2([0, T], \fsL^{2^*}(\Omega))}\|_{l^2}^2
    &\leq C_\Omega\|\|i^{\frac{2k + 1 + \eta}{2}} c_i^{\frac{p}{2}}\|_{\fsL^2([0, T], \fsH^1(\Omega))}\|_{l^2}^2\\
    &=C_\Omega \int_0^T\int_\Omega \sum_{i=1}^{+\infty} \left(|\nabla(i^{\frac{2k + 1 + \eta}{2}} c_i^{\frac{p}{2}})|^2 + |i^{\frac{2k + 1 + \eta}{2}} c_i^{\frac{p}{2}}|^2\right).
  \end{align*}

  We now use Minkowski's inequality to interchange the $\fsL^{2^*}(\Omega)$ and the $l^2$ norms, obtaining
  \begin{align*}
    \|\|i^{\frac{2k + 1 + \eta}{p}} c_i\|_{l^p}\|_{\fsL^p([0, T], \fsL^{\frac{2^*p}{2}}(\Omega))}^p
    &= \|\|\left(i^{\frac{2k + 1 + \eta}{p}} c_i\right)^{\frac{p}{2}}\|_{l^2}\|_{\fsL^2([0, T], \fsL^{2^*}(\Omega))}^2\\
    &
    \leq
    \|\|i^{\frac{2k + 1 + \eta}{2}} c_i^{\frac{p}{2}}\|_{\fsL^2([0, T], \fsL^{2^*}(\Omega))}\|_{l^2}^2.
  \end{align*}

  We can use Hölder's inequality to deduce that
    \begin{align*}
      \|\rho_k\|_{\fsL^p([0, T], \fsL^{\frac{2^*p}{2}}(\Omega))}^p
      &= \|\|i^k c_i\|_{l^1}\|_{\fsL^p([0, T], \fsL^{\frac{2^*p}{2}}(\Omega))}^p\\
    &\leq
    \|\|i^{k-\frac{2k + 1 + \eta}{p}}\|_{l^{p'}}\|i^{\frac{2k + 1 +\eta}{p}} c_i\|_{l^p}\|_{\fsL^p([0, T], \fsL^{\frac{2^*p}{2}}(\Omega))}^p.
  \end{align*}
  We note that $\|i^{k-\frac{2k + 1 + \eta}{p}}\|_{l^{p'}}$ is finite provided that $\left(\frac{2k + 1 + \eta}{p} - 1\right)p' > 1$. This condition amounts to
  \begin{equation}\label{eq:cdtpk}
    p < k + 1 + \frac{\eta}{2}.
  \end{equation}
  Collecting the preceding estimates, we have thus shown that under the condition \eqref{eq:cdtpk} one has for some constant $C > 0$,
  \begin{equation}\label{eq:coagulation:energypk}
    \|\rho_k\|_{\fsL^p([0, T], \fsL^{\frac{2^*p}{2}}(\Omega))}^p\leq C\left(1 +\int_0^T\int_\Omega \rho_k^{p+1}\right).
  \end{equation}

  We are now in the same situation as in the proof of \cref{thm:coagulation-basecase} and we conclude with an interpolation argument.

  One can apply \cref{thm:interpolation} with $\alpha$ found in \eqref{eq:apriori}. Hence, there exists some $\eps_k$ such that one has for $0 < t < T$
  \begin{equation*}
    \|\rho_k(t) \|_{\fsL^{3+\eps_k}(\Omega)}
    \le  C_{d, \alpha}\, \| {w'}(t) \|_{\fsC^{\alpha}(\overline{\Omega}) }^{\frac{1}{3- \alpha}}\,
    \| \nabla \rho_k(t)\|_{\fsL^2(\Omega)}^{\frac{2}{3+\eps_k}}
    +  C_{d,\alpha}\, \| {w'}(t) \|_{\fsC^{\alpha}(\overline{\Omega})}.
  \end{equation*}

  Integrating this estimate, we deduce that for some constant $C$ with the same dependence as in the statement of \cref{thm:coagulation-basecase}
  \begin{equation*}
    \|\rho_k \|_{\fsL^{3+\eps_k}(\Omega_T)}^{3+\eps_k}
    \le  C
    \| \nabla \rho_k\|_{\fsL^2(\Omega_T)}^{2}
    +  C.
  \end{equation*}

  Using the energy estimate \eqref{eq:coagulation:energy2k} (and allowing $C$ to change from line to line), we obtain
  \begin{equation*}
    \|\rho_k \|_{\fsL^{3+\eps_k}(\Omega_T)}^{3+\eps_k}
    \le  C
    \| \rho_k\|_{\fsL^3(\Omega_T)}^{3}
    +  C.
  \end{equation*}

  Young's inequality then gives that $\|\rho_k \|_{\fsL^{3+\eps_k}(\Omega_T)} \leq C$ for some $\eps_k > 0$. We deduce the second bound thanks to the estimate \eqref{eq:coagulation:energypk}.

  \end{proof}
  
  \begin{proof}[Proof of Theorem~\ref{thm:maind}]
    Similarly to \cref{sec:endproof}, we begin by showing that all the moments are bounded in $\fsL^\infty(\Omega_T)$. For any $k\in\N^*$, we define the sequence $(q^k_{r})_{r\in\N^*}$ by $\frac{1}{q^k_{r + 1}} = \frac{2}{q^k_r} - \frac13$ and $q^k_1 = 3 + \eps_k$. Since $q^k_1 > 3$, this sequence becomes non-positive in a finite number of steps. Let $k\in\N^*$, we note that
    \begin{equation*}
      \left(\partial_t - d_i\Delta\right)i^{k + 1 + \eta}c_i \leq i^{k + 1 + \eta}\left(Q_i^+ + F_i^+\right). 
    \end{equation*}
    Using \ref{ass:bounda'}, we can now bound
    \begin{align*}
      i^{k + 1 + \eta}Q_i^+
      &= \frac12 i^{k + 1 + \eta}\sum_{j = 1}^{i - 1} a_{i - j, j}c_{i - j}c_j\\
      &\leq \frac{C_a}{2}i^{k - 1}\sum_{j = 1}^{i - 1} (i - j)c_{i - j}jc_j.
    \end{align*}
      
    We use the same decomposition as in the proof of \cref{thm:main2}:
    \begin{align*}
      i^{k - 1}\sum_{j = 1}^{i - 1} (i - j)c_{i - j}jc_j
      &= i^{k - 1}\sum_{j = 1}^{\lfloor \frac{i}{2} \rfloor} (i - j)c_{i - j}jc_j
      + i^{k - 1}\sum_{j = \lfloor  \frac{i}{2} \rfloor + 1}^{i - 1} (i - j)c_{i - j}jc_j\\
      &\leq 2^{k - 1}\sum_{j = 1}^{\lfloor  \frac{i}{2} \rfloor} (i - j)^kc_{i - j}jc_j
      + 2^{k - 1}\sum_{j = \lfloor  \frac{i}{2} \rfloor + 1}^{i - 1} (i - j)c_{i - j}j^kc_j.
    \end{align*}

    To obtain the last line, we used that $i \leq 2(i - j)$ if $1\leq j\leq \left\lfloor  \frac{i}{2} \right\rfloor$ and $i\leq 2j$ if $\left\lfloor  \frac{i}{2} \right\rfloor + 1\leq j \leq i - 1$. Proceeding as in the proof of \cref{thm:main2}, we deduce 
    $$i^{k + 1 + \eta}Q_i^+ \leq C_a 2^{k - 1}\rho_k\rho_1.$$

    Similarly, (using again $\beta_{i+j, i}\leq \frac{i + j}{i}$)
    \begin{align*}
      i^{k + 1 + \eta}F_i^+
      & = i^{k + 1 + \eta}\sum_{j = 1}^{+\infty} B_{i + j}\beta_{i + j, i}c_{i + j}\\
      &\leq C_B \sum_{j = 1}^{+\infty} \frac{i^{k + 1 + \eta}}{i(i+j)^\eta}c_{i + j}\\
      &\leq C_B\rho_k.
    \end{align*}

    Hence,
    \begin{equation}\label{eq:inductionci}
      \left(\partial_t - d_i\Delta\right)i^{k + 1 + \eta}c_i \leq C_a2^{k - 1}\rho_k \rho_1 + C_B\rho_k \leq C_a2^{k - 1}\rho_k^2 + C_B\rho_k. 
    \end{equation}

    We note that the solution to the Neumann heat equation in dimension $d\leq 4$ with a forcing in $\fsL^\kappa(\Omega_T)$ is bounded in $\fsL^{\kappa'}(\Omega_T)$ with $\frac{1}{\kappa'} = \frac{1}{\kappa} - \frac13$ (provided that $\kappa < 3$), see \cref{thm:smoothing} in Appendix~\ref{sec:smoothing}.

    Using the non-negativity of the $c_i$, we deduce that $i^{k + 1 + \eta}c_i$ is uniformly in $i$ bounded in $\fsL^{q^k_2}(\Omega_T)$. The same holds for $\rho_k$, since $\rho_k = \|i^kc_i\|_{l^1} \leq \|i^{k + 1 + \eta}c_i\|_{l^\infty}\|i^{- (1 + \eta)}\|_{l^1}$ by Hölder's inequality.    
    
    As before, we can iterate this argument and deduce the boundedness of $\rho_k$ in $\fsL^\infty(\Omega_T)$. Indeed, if $\rho_k$ is bounded in $\fsL^{q^k_r}(\Omega_T)$ then, using \eqref{eq:inductionci} we deduce that $i^{k + 1 + \eta}c_i$ is bounded uniformly in $i$ in $\fsL^{q^k_{r+1}}(\Omega_T)$ and hence $\rho_k$ is also bounded in $\fsL^{q^k_{r+1}}(\Omega_T)$. We iterate once more (or twice if we reach exactly $q_r^k = 6$) to deduce the $\fsL^\infty(\Omega_T)$ bound.
    
    Regarding the regularity in Sobolev spaces, we proceed similarly as in the proof of \cref{thm:main2} and we therefore only reproduce the main steps of the argument. First, we see that $(\partial_t - d_i\Delta)i^{k + 1 + \eta}c_i = i^{k + 1 + \eta}(Q_i + F_i)$, where the forcing term $i^{k + 1 + \eta}(Q_i + F_i)$ is bounded in $\fsL^\infty(\Omega_T)$ uniformly in $i$ since it can be controlled by a product of moments $\rho_l$, known to be bounded in $\fsL^\infty(\Omega_T)$ thanks to the previous steps of the proof. Hence, by maximal regularity for all $k\in\N$, $i^{k + 1 + \eta}c_i$ is bounded uniformly in $i$ in any $W^{2, p}(\Omega_T)$ for $1 < p < +\infty$. We can then iterate the argument to deduce that $i^{k + 1 + \eta}(Q_i + F_i)$ is bounded in $W^{2, p}(\Omega_T)$ for $1 < p < +\infty$ and hence $i^{k + 1 + \eta}c_i$ satisfies higher Sobolev estimates.
  \end{proof}

\appendix
  \section{Smoothing estimates for the heat equation in Lebesgue spaces}\label{sec:smoothing}
  Although the following smoothing properties of the heat equation in Lebesgue spaces are well known, we could not find a precise reference in the literature covering the full range of parameters considered here (see, for instance, \cite[Chapter III, Theorem 9.1]{ladyzenskajasolonnikov1968} for a partial result). We therefore include a short proof.
\begin{prop}\label{thm:smoothing}
  Let $\Omega$ be a smooth ($\fsC^2$) bounded domain of $\R^d$, $d\geq 1$. We consider \mbox{$1 < p_f, \, p_u < + \infty$} and $1 \leq q_f, \ q_0 \leq q_u \leq +\infty$ satisfying
  \begin{equation*}
    \begin{cases}
      \frac{1}{p_f} +\frac{d}{2q_f} = 1 + \frac{1}{p_u} + \frac{d}{2q_u},\\
      \frac{d}{2}\left(\frac{1}{q_0}- \frac{1}{q_u}\right) < \frac{1}{p_u}.
    \end{cases}
  \end{equation*}
  Let $T > 0$, $f\in\fsL^{p_f}([0, T], \fsL^{q_f}(\Omega))$ and $u_0\in\fsL^{q_0}(\Omega)$. Let $u$ be the solution to
  \begin{equation*}
    \left\{
    \begin{aligned}
      & \partial_t u - \Delta u = f\quad
      &&\text{in } [0, T]\times\Omega, \\
      & \nabla u\cdot n = 0 \quad
      &&\text{on }  [0,T]\times\partial\Omega, \\
      & u(0,\cdot) = u_0 \quad 
      &&\text{in } \Omega.
  \end{aligned}
  \right.
\end{equation*}

  Then, there exists a constant $C$ depending on $\Omega$, $T$ and $p_u$, $q_u$, $p_f$, $q_f$, $q_0$ such that
  \begin{equation*}
    \|u\|_{\fsL^{p_u}([0, T], \fsL^{q_u}(\Omega))} \leq C \left(\|f\|_{\fsL^{p_f}([0, T], \fsL^{q_f}(\Omega))} + \|u_0\|_{\fsL^{q_0}(\Omega)}\right).
  \end{equation*}
\end{prop}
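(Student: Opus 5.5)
We split $u = u_1 + u_2$ by Duhamel's formula: $u_1(t) = e^{t\Delta}u_0$ and $u_2(t) = \int_0^t e^{(t-s)\Delta} f(s)\,\dd s$. Here $e^{t\Delta}$ is the Neumann heat semigroup on $\Omega$. The one tool is the $\fsL^{q}\to\fsL^{r}$ smoothing estimate of this semigroup on a bounded $\fsC^2$ domain, for $1\le q\le r\le\infty$ and $0<t\le T$:
\begin{equation*}
  \|e^{t\Delta} g\|_{\fsL^{r}(\Omega)} \le C_T\, t^{-\frac d2\left(\frac1q-\frac1r\right)}\|g\|_{\fsL^{q}(\Omega)}.
\end{equation*}
I would take it from Gaussian upper bounds on the Neumann heat kernel, $0\le G(t,x,y)\le C_T t^{-d/2}e^{-c|x-y|^2/t}$ on $[0,T]$, followed by Young's convolution inequality. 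The constant is allowed to depend on $T$ because the kernel does not decay, owing to the constant mode; this causes no trouble since $T$ is fixed.

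For the initial datum, the estimate with $q=q_0$ and $r=q_u$ gives $\|u_1(t)\|_{\fsL^{q_u}}\le C t^{-\frac d2(\frac1{q_0}-\frac1{q_u})}\|u_0\|_{\fsL^{q_0}}$. The second condition in the statement is exactly $\frac d2(\frac1{q_0}-\frac1{q_u})\,p_u<1$, so $t\mapsto t^{-\frac d2(\frac1{q_0}-\frac1{q_u})}$ lies in $\fsL^{p_u}(0,T)$. This yields the $u_0$ part of the bound.

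For the forcing, the estimate with $q=q_f\le r=q_u$ gives
\begin{equation*}
  \|u_2(t)\|_{\fsL^{q_u}} \le C\int_0^t (t-s)^{-\theta}\|f(s)\|_{\fsL^{q_f}}\,\dd s, \qquad \theta := \frac d2\left(\frac1{q_f}-\frac1{q_u}\right).
\end{equation*}
The first relation in the statement says $\theta = 1 - \frac1{p_f} + \frac1{p_u}$. Since $p_f,p_u\in(1,\infty)$, we have $0\le\theta<1$ exactly when $p_f \le p_u$ holds with the right strictness.
\begin{itemize}
  \item If $0<\theta<1$, the one-dimensional Hardy--Littlewood--Sobolev inequality (fractional integration on $\R$ of order $1-\theta$, with the function extended by zero outside $[0,T]$) maps $\fsL^{p_f}\to\fsL^{p_u}$, because $\frac1{p_u}=\frac1{p_f}-(1-\theta)$ and $1<p_f<p_u<\infty$.
  \item If $\theta=0$, the scaling relation forces $p_u=p_f-$ type endpoint behaviour, namely $q_f=q_u$ and $\frac1{p_f}=1+\frac1{p_u}$, which is impossible. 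So this case does not arise, or at most reduces to the trivial Young bound with an $\fsL^1$ kernel on $[0,T]$.
\end{itemize}
Hence $\|u_2\|_{\fsL^{p_u}(\fsL^{q_u})}\le C\|f\|_{\fsL^{p_f}(\fsL^{q_f})}$.

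The main obstacle is the kernel-smoothing bound on a bounded domain with Neumann conditions, for the full range $1\le q\le r\le\infty$ including the endpoints. I would first try to cite Gaussian bounds for Neumann heat kernels on $\fsC^2$ (Lipschitz would suffice) domains, due to Davies and others. As a fallback, I would combine three facts: the $\fsL^1\to\fsL^\infty$ bound $t^{-d/2}$, which follows from a Nash inequality on $\Omega$ obtained from Sobolev plus the mass term; $\fsL^q$ contractivity; and Riesz--Thorin interpolation, which gives the general exponents. Everything else is the scaling bookkeeping above.
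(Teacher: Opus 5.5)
\paragraph{Comparison with the paper.} Your route is the paper's: Duhamel's formula, the Neumann semigroup bound $\|e^{t\Delta}\|_{\fsL^{q}\to\fsL^{r}}\le C(1+t^{-\frac d2(\frac1q-\frac1r)})$, and integrability of $t^{-\frac d2(\frac1{q_0}-\frac1{q_u})}$ in $\fsL^{p_u}(0,T)$ for the datum. The forcing term is then handled by the one-dimensional Hardy--Littlewood--Sobolev inequality for the time convolution. The paper simply cites the semigroup bound from Winkler; your sketch via Gaussian bounds, or Nash plus Riesz--Thorin, is also fine.

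\paragraph{The gap.} The step that fails is your case analysis on $\theta=\frac d2(\frac1{q_f}-\frac1{q_u})=1-\frac1{p_f}+\frac1{p_u}$. Since $p_f>1$, one always has $\theta>\frac1{p_u}>0$, so your $\theta=0$ bullet is vacuous. The real condition is $\theta<1$, which is equivalent to $p_f<p_u$. This is \emph{not} implied by the hypotheses once $d\ge 2$ and $q_f$, $q_u$ are far apart. You remark that $\theta<1$ holds ``exactly when $p_f\le p_u$ with the right strictness'' but never verify it, and the case $\theta\ge 1$ is silently dropped. There the kernel $(t-s)^{-\theta}$ is not locally integrable, and the one-dimensional HLS inequality (which needs $0<\theta<1$) is unavailable.

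\paragraph{The statement is false in that range.} Take $d=2$, $q_f=1$, $q_u=q_0=\infty$ and $p_f=p_u=p$; both conditions of the statement then hold. Take also $u_0=0$ and $f(s,y)=\phi_\eps(y)$, a time-independent nonnegative mollifier of unit mass supported in $B(x_0,\eps)$ around an interior point $x_0$. A near-diagonal lower bound $G(\tau,x,y)\ge c\,\tau^{-1}$ for $|x-y|\le\sqrt\tau$, $\tau\le t_0$, gives $u(t,x_0)\ge c\log\bigl(\min(t,t_0)/\eps^2\bigr)$. Hence $\|u\|_{\fsL^p(\fsL^\infty)}\to\infty$, while $\|f\|_{\fsL^p(\fsL^1)}=T^{1/p}$ stays fixed. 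So the gap cannot be closed as stated.

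\paragraph{The paper has the same defect, and the fix.} The paper asserts $0\le\theta<d$, whereas the one-dimensional HLS inequality it invokes requires $\theta<1$. The correct statement needs the extra hypothesis $p_f<p_u$, equivalently $\frac d2(\frac1{q_f}-\frac1{q_u})<1$. This holds wherever the paper uses the proposition: $\theta=\frac12$ for $d=2$, $\theta=\frac23$ for $d=4$, and smaller in lower dimensions. With this hypothesis added, your argument is complete and coincides with the paper's.
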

\begin{proof}
  We recall the smoothing estimates on the semi-group $e^{t\Delta}$ for any \mbox{$1\leq p\leq q \leq +\infty$} (see for instance \cite[Lemma 1.3]{winkler2010a})
  \begin{equation*}
    \|e^{t\Delta}\|_{\fsL^{p}(\Omega)\to \fsL^{q}(\Omega)} \leq C\left(1 + t^{-\frac{d}{2}\left(\frac1p-\frac1q\right)}\right),
  \end{equation*} 
  for some constant $C > 0$ and for any $0 < t < T$.

  Then, using the Duhamel formula $u(t) = e^{t\Delta}u_0 + \int_0^te^{(t - s)\Delta}f(s)\dd s$, we deduce that for some constant $C > 0$
  \begin{align*}
    \|u(t, \cdot)\|_{\fsL^{q_u}(\Omega)}
    &\leq C\left(1 + t^{-\frac{d}{2}\left(\frac{1}{q_0}-\frac1{q_u}\right)}\right)\|u_0\|_{\fsL^{q_0}(\Omega)}\\
    &\phantom{=}+ \int_0^t C \left(1 + (t-s)^{-\frac{d}{2}\left(\frac1{q_f}-\frac1{q_u}\right)}\right)\|f(s, \cdot)\|_{\fsL^{q_f}(\Omega)}\dd s.
  \end{align*}
  By the assumption on $q_0$, we see that the term $1 + t^{-\frac{d}{2}\left(\frac{1}{q_0}-\frac1{q_u}\right)}$ is bounded in $\fsL^{p_u}([0, T])$. For the second term, we first notice that $0 \leq \left(\frac{1}{q_f}-\frac{1}{q_u}\right)\frac{d}{2} < d$ and $\frac{1}{p_f} + \left(\frac1{q_f}-\frac1{q_u}\right)\frac{d}{2} = 1 + \frac{1}{p_u}$. We can hence use Hardy-Littlewood-Sobolev inequality on $[0, T]$ (see for instance \cite[Theorem 4.3]{liebloss2001}) to bound the second term in $\fsL^{p_u}([0, T])$. This yields the conclusion.
\end{proof}

\printbibliography

\end{document}